\begin{document}

\newtheorem{lem}{Lemma}
\newtheorem{defn}{Definition}
\newtheorem{prop}{Proposition}
\newtheorem{thm}{Theorem}
\newtheorem{cor}{Corollary}

\title{Classification of Minimal Separating Sets in Low Genus Surfaces}
\author{J. J. P. Veerman\thanks{Fariborz Maseeh Dept. of Math. and Stat., Portland State Univ., Portland, OR, USA; e-mail: veerman@pdx.edu}, William Maxwell\thanks{Fariborz Maseeh Dept. of Math. and Stat., Portland State Univ., Portland, OR, USA}, Victor Rielly\thanks{Fariborz Maseeh Dept. of Math. and Stat., Portland State Univ., Portland, OR, USA}, Austin K. Williams\thanks{Fariborz Maseeh Dept. of Math. and Stat., Portland State Univ., Portland, OR, USA}
}
\maketitle

\begin{abstract}
Consider a surface $S$ and let $M\subset S$. If $S\setminus M$ is not connected, then we say $M$ \emph{separates} $S$, and we refer to $M$ as a \emph{separating set} of $S$. If $M$ separates $S$, and no proper subset of $M$ separates $S$, then we say $M$ is a \emph{minimal separating set} of $S$.
In this paper we use computational methods of combinatorial topology to classify the minimal
separating sets of the orientable surfaces of genus $g=2$ and $g=3$. The classification for genus
0 and 1 was done in earlier work, using methods of algebraic topology.
\end{abstract}

\vskip .1in\noindent
{\Large Keywords:} Combinatorial Topology, Graph Embeddings, Minimal Separating Sets, Rotation Systems.

\section*{Surfaces and Separating Sets}
We begin with some basic definitions.

\begin{defn}
\label{def:surface}
When we say \emph{surface} we are referring to a closed, connected, triangulated, orientable, 2-manifold.
\end{defn}

Let $S$ be a surface and let $M\subset S$.

\begin{defn}
\label{def:separates}
If $S\setminus M$ is not connected, then we say $M$ \emph{separates} $S$, and we refer to $M$ as a \emph{separating set} of $S$.
\end{defn}

\begin{defn}
\label{def:minsepset}
If $M$ separates $S$, and no proper subset of $M$ separates $S$, then we say $M$ is a \emph{minimal separating set} of $S$.
%or a \emph{parsimonious cleavage}.
\end{defn}

In this paper we will not consider pathological separating sets, such as the common boundary of the Lakes of Wada. To that end, when we refer to subsets of a surface it will be understood that we are referring to finitely triangulated subsets. Such subsets may be thought of as finite simplicial subcomplexes of the triangulated surface. Such sets occur naturally in the following
geometric context. Let $S$ be a connected compact surface whose distance $d(.,.)$ is inherited
from a Riemannian metric. Given two distinct points $p$ and $q$ in $S$, the mediatrix
is the set of points $x$ such that $d(x,q)=d(x,p)$. Brillouin zones were introduced by
Brillouin in the 30s and play an important role in physics \cite{Mermin}
and computational geometry \cite{Preparata}. Their boundaries are formed by pieces of of mediatrices\cite{Peixoto}. Mediatrices also play an important role in the study of focal decomposition
\cite{Peixoto82}.

In \cite{Bernhard/Veerman} and \cite{Veerman/Bernhard}
it is proved that mediatrices are closed simplicial 1-complexes. Vice versa, as we will argue in the
concluding remarks of this paper, every minimal separating simplicial 1-complex can be realized as
a mediatrix.
In the cited papers, the topological classification of minimally
(triangulable) sets up to graph isomorphism was given for the (oriented) surfaces with genus 0 and 1.
Namely, the 2-sphere admits only one minimal separating set (a simple, closed loop), and
the torus admits 5 topologically distinct minimal separating sets.

Here we carry out the classification up to graph isomorphism for the (oriented) genus 2
and 3 surfaces. Our proofs differ substantially from those in \cite{Bernhard/Veerman} and \cite{Veerman/Bernhard} where methods
of algebraic topology were used. In this paper we use combinatorial topology and combine it with
a substantial computational effort to complete the classification.

\section*{Graphs and Embeddings}

Colloquially speaking, when we talk about \emph{graphs} we are referring to undirected
multigraphs which may or may not have loops. This is made rigorous as follows.

\begin{defn}
\label{def:graph}
A \emph{graph} $G$ is a pair $(V(G),E(G))$ where $V(G)$ is a non-empty finite set of
elements called \emph{vertices}, and $E$ is a finite multiset whose elements are
unordered multiset pairs of vertices in $V(G)$. Elements of $E(G)$ are referred to as
\emph{edges} and the elements of edges are referred to as \emph{edge-ends}. Two graphs
$G$ and $H$ are the same ($G=H$) if there is a bijection from
$V(G)$ to $V(H)$ that preserves the edges.
\end{defn}

When the graph in question is clear from context, we may use the simpler notation $V$ and
$E$ to refer to the vertex set and edge set, respectively, of a given graph.

\begin{defn}
\label{def:incident}
A vertex contained in an edge is said to be \emph{incident} to the edge.
\end{defn}

\begin{defn}
\label{def:adjacent}
Vertices $v_1, v_2 \in V$ are said to be \emph{adjacent} if there exists an edge that contains them both.
\end{defn}

\begin{defn}
\label{def:loop}
An edge whose edge-ends are identical is called a \emph{loop}. An edge that is not a loop is called a \emph{non-loop}.
\end{defn}

\begin{defn}
\label{def:degree}
Let $v$ be a vertex of a graph. Let $l$ and $n$ be the number of loops and non-loops, respectively, incident to $v$. Then the \emph{degree of $v$}, denoted $deg(v)$, is given by $n+2l$.
\end{defn}

Informally, the degree is the number of edge-ends incident to the vertex.

\begin{defn}
\label{def:induced-subgraph}
Let $G$ be a graph and let $A\subset V$. Then the \emph{subgraph of $G$ induced by $A$},
denoted $G[A]$, is the graph whose vertex set is $A$, and whose edge set is the subset of
$E(G)$ all of whose edge-ends are in $A$.
\end{defn}

Less formally, the induced subgraph $G[A]$ is the graph obtained from $G$ by removing all vertices from $G$ except those in $A$. Edges that lose an edge-end during this process are also removed.

We often want to consider what parts of $G$ remain after the removal of some subset $B$
of its vertices. For example, let $B \subset V$. Then removal of the vertices in $B$ from the
graph $G$ will yield the induced subgraph $G[V \setminus B]$. We may think of the induced
subgraph $G[V\setminus B]$ as the part of $G$ that remains when we remove $B$. Thus, for
simplicity, we use the notation $G \setminus B$ to denote the induced subgraph
$G[V \setminus B]$.

\begin{defn}
\label{def:graph-homeomorphic}
Two graphs $G$ and $H$ are said to be \emph{graph-homeomorphic}
if there is a graph isomorphism from a subdivision of $G$ to a subdivision
of $H$.
\end{defn}

Recall that a graph $G'$ is a subdivision of a graph $G$ if we get $G'$ by subdividing
one or more edges in $G$: replace an edge $\{u,v\}$ with a pair of edges $\{u,w\}$ and $\{w,v\}$
where $w$ is a new vertex.
One can easily verify that graph-homeomorphism is an equivalence relation on the set of all graphs.

\begin{defn}
\label{def:homeomorphically-irreducible}
A graph $G$ is said to be \emph{homeomorphically
irreducible} if whenever a graph $H$ is homeomorphic to $G$, either
$G=H$ or $H$ is a subdivision of $G$.
\end{defn}
% [1, section 8.1]
Behzad and Chartrand show in \cite{Behzad} that in every equivalence class
of homeomorphic graphs there exists a unique homeomorphically irreducible
graph.

\begin{defn}
\label{def:embedding}
An \emph{embedding} of a graph $G=(V,E)$ into a surface $S$ is a function $\phi$ that maps distinct vertices of $G$ to distinct points in $S$ and edges of $G$ to simple arcs (homomorphic images of $[0,1]$) in $S$ such that:
\begin{enumerate}
\item for every edge $e=\{ v_1 , v_2 \}$ in $E$, the endpoints of the simple arc $\phi(e)$ are the points $\phi(v_1)$ and $\phi(v_2)$,
\item for all $e \in E$, and for all $v \in V$, $\phi(v)$ is not in the interior of $\phi(e)$,
\item for all $e_1 , e_2 \in E$, $\phi(e_1)$ and $\phi(e_2)$ do not intersect at a point that is interior to either arc.
\end{enumerate}
\end{defn}

Note that if $\phi$ is an embedding of a graph $G$ into a surface $S$, then the $\phi[G]$ is a simplicial 1-subcomplex of $S$.

\begin{defn}
\label{def:regions}
The \emph{regions} of an embedding $\phi$ of a graph $G$ into a surface $S$ are the connected components of $S\setminus\phi[G]$.
\end{defn}

\section*{Minimal Separating Sets as Graph Embeddings}

No 0-complex can separate a surface. Moreover, if $M$ separates a surface $S$, then so does the 1-skeleton of $M$ and it follows that every minimal separating set of a surface is a 1-complex. As such, a minimal separating set of a surface can be thought of as the image of a graph embedding into the surface.

\begin{defn}
\label{def:minsepembedding}
If $\phi$ is an embedding of a graph $G$ into a surface $S$ and the image of $\phi$ is a minimal separating set of $S$ then we say $\phi$ is a \emph{minimal separating embedding}.
\end{defn}

Note that if $\phi$ is an embedding of a graph $G$ into $S$, and $\psi$ is an embedding of a graph $H$ into $S$,
and if $\phi[G]$ and $\psi[H]$ are homeomorphic (in the topological sense) then the graphs $G$ and $H$ are graph-homeomorphic.

\begin{defn}
\label{def:graph-underlying-M}
Suppose $M$ is a minimal separating set of a surface
$S$, and let
\begin{align*}
\mathcal{C}_{M}=\{G:\exists \text{ a } \text{ graph embedding } \phi
\textrm{ of } G \textrm{ into } S \textrm{ such that } \phi[G]=M\}\;.
\end{align*}
Then $\mathcal{C}_{M}$ contains a unique homeomorphically irreducible
graph $G_{M}$. We say $G_{M}$ is \emph{the graph underlying}
$M$.
\end{defn}

\begin{defn}
\label{def:min-sep-graph}
We refer to a graph $G$ as a\emph{ minimal
separating graph of $S$} if $G$ is the graph underlying some minimal
separating set of $S$.
\end{defn}

\begin{defn}
\label{def:G_g}
Let $\mathbb{G}_{g}$ denote the set of all
minimal separating graphs of an orientable surface of genus $g$.
\end{defn}

\begin{defn} We will denote the set of all minimal separating graphs
$\cup_{g\geq 0} \,\mathbb{G}_{g}$ by $\mathbb{G}$.
\label{defn:separating graphs}
\end{defn}

We will classify the minimal separating sets of the orientable surfaces
of genus $g\le 3$ in various ways.

\begin{defn} The least separated genus, $\gamma_-(G)$, of a graph $G$ equals the minimal genus of
the oriented surfaces in which $G$ can be embedded in such a way that its embedding
is a minimal separating set. It is $\infty$ if that set of (oriented) surfaces is empty.
\label{defn:mingenus}
\end{defn}

We note here that theorem \ref{thm:separating graphs} shows that $\gamma_-(G)=\infty$
if and only if at least one vertex of $G$ has odd degree or degree zero.

\section*{The Finitude of $\mathbb{G}_{g}$}

By Euler's theorem we know that if $\phi$ is an embedding of $G$ into $S$ then $|V|-|E|+|R|\ge2-2g$, where $V$ is the vertex set of $G$,
$E$ is the edge set of $G$, $R$ is the set of regions of the embedding,
and $g$ is the genus of $S$.

\begin{lem}
\label{lem:L1}
If $\phi$ is a minimal separating embedding of $G$ into $S$
then $|R|=2$.
\end{lem}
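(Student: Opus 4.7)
The plan is to argue by contradiction: since $\phi[G]$ separates $S$ we automatically have $|R|\ge 2$, so it suffices to rule out $|R|\ge 3$. Assuming $|R|\ge 3$, I want to exhibit a proper simplicial subset $M'\subsetneq\phi[G]$ that still separates $S$, directly violating the minimality of $\phi[G]$. The natural candidate for $M'$ is obtained by deleting the open interior of a single well-chosen edge of the embedding, which I expect to merge exactly two regions and leave the remaining $|R|-1\ge 2$ regions intact.

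The key step is therefore to locate an edge $e$ of $G$ whose two local sides in $S$ lie in two \emph{distinct} regions of $S\setminus\phi[G]$. To produce such an edge, I would pick points $p_1,p_2$ in two different regions $R_1,R_2$ and, using that $S$ is connected and triangulated, choose a piecewise-linear path $\gamma$ from $p_1$ to $p_2$ that meets $\phi[G]$ in finitely many points, each lying in the interior of some edge and each a transverse crossing (standard general-position perturbation off the vertex set). As one traverses $\gamma$, the sequence of regions visited starts at $R_1$ and ends at $R_2\ne R_1$, so at some crossing the region on one side differs from the region on the other; the edge crossed at that moment is the edge $e$ we need.

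Once $e$ is in hand, let $M':=\phi[G]\setminus\mathrm{int}(\phi(e))$, which is a proper finitely triangulated subset of $\phi[G]$. A tubular neighborhood of $\phi(e)$ in $S$ shows that the connected components of $S\setminus M'$ are obtained from those of $S\setminus\phi[G]$ by fusing precisely the two regions on the two sides of $\phi(e)$, leaving all other regions unchanged. Since $|R|\ge 3$, $S\setminus M'$ still has at least two components, so $M'$ separates $S$ and contradicts minimality; hence $|R|=2$. The main obstacle I foresee is making the general-position argument rigorous in the given combinatorial setting: one must justify that $\gamma$ can be chosen to avoid vertices of $\phi[G]$ and to cross edges transversally, and that such a crossing genuinely witnesses the two adjacent regions of $\phi(e)$. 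This is routine in the PL/triangulated category, but writing it out carefully (perhaps by taking $\gamma$ to be an edge-path in the dual of a refined triangulation of $S$) is where care is required.
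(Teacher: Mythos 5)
Your proposal is correct and follows essentially the same argument as the paper: assume $|R|\ge 3$, locate an edge of $\phi[G]$ bordering two distinct regions, and delete part of it to merge exactly those two regions, leaving at least two components and contradicting minimality. The only differences are cosmetic --- the paper removes a single interior point of the edge (rather than its whole open interior) and finds the edge by intersecting closures of adjacent regions rather than by your transverse-path argument, which if anything is the more careful way to produce the required edge.
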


\begin{proof}
Since $\phi[G]$ separates $S$, it is clear that $|R|\ge2$.
Now suppose $|R|>2$. Since $S$ is connected, there exist distinct
regions $f_{1}$ and $f_{2}$ such that $\bar{f}_{1}\cap\bar{f_{2}}\ne\emptyset$.
Since $f_{1}$ and $f_{2}$ are distinct regions, $f_{1}\cap f_{2}=\emptyset$.
Thus $bd(\bar{f}_{1})\cap bd(\bar{f_{2}})\ne\emptyset$. Note that
$bd(\bar{f}_{1})\cap bd(\bar{f_{2}})=\phi[\{e_{1}\}\cup...\cup\{e_{n}\}]$
for some $e_{1},...,e_{n}\in E$. Let $e$ be one of the $e_{1},...,e_{n}$
and let $x\in int(\phi(e))$. Then $x\in\phi[G]$, but $S\setminus\left(\phi[G]\setminus\{x\}\right)$
has exactly one less connected component than $S\setminus\phi[G]$.
Since $S\setminus\phi[G]$ has strictly more than 2 connected components,
$S\setminus\left(\phi[G]\setminus\{x\}\right)$ has at least 2 connected
components. Thus $\phi[G]\setminus\{x\}$ separates $S$ and $\phi[G]$
is not a minimal separating set.
\end{proof}

\begin{lem}
\label{lem:L2}
If $\phi$ is a minimal separating embedding of $G$ into $S$
and $e\in E$ then $\phi(e)$ is in the boundary of both regions of $\phi$.
\end{lem}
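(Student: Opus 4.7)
The plan is to argue by contradiction. Assume $\phi(e)$ is not in the boundary of both regions. By Lemma \ref{lem:L1}, there are exactly two regions $f_1$ and $f_2$, and every interior point of $\phi(e)$ lies in $\overline{f_1}\cup\overline{f_2}$ (since the complement of $\phi[G]$ is dense, as $\phi[G]$ is a 1-complex in a 2-manifold), so $\phi(e)$ must be in the boundary of at least one region. Without loss of generality, assume $\phi(e)$ is in the boundary of $f_1$ only, so $\phi(e)\cap \overline{f_2}=\emptyset$ on the interior of $\phi(e)$.

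Next I would pick any point $x\in\mathrm{int}(\phi(e))$ and show that the proper subset $\phi[G]\setminus\{x\}$ of $\phi[G]$ still separates $S$, contradicting minimality. The set-theoretic identity to use is
\[
S\setminus(\phi[G]\setminus\{x\}) \;=\; f_1\cup f_2\cup\{x\}.
\]
Because $x$ is an interior point of the arc $\phi(e)$ and not a vertex or crossing, a small disk neighborhood $U$ of $x$ in $S$ meets $\phi[G]$ only in a sub-arc of $\phi(e)$ through $x$. That arc splits $U$ into two open half-disks, and the hypothesis that $\phi(e)$ touches only $f_1$ forces both half-disks to lie in $f_1$. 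Consequently, in a neighborhood of $x$ the set $f_1\cup f_2\cup\{x\}$ looks like two half-disks glued at $x$, both sitting inside $f_1$.

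From this local picture, I would conclude that the connected component of $x$ in $f_1\cup f_2\cup\{x\}$ is precisely $f_1\cup\{x\}$, which is disjoint from $f_2$. Hence $f_1\cup\{x\}$ and $f_2$ realize $S\setminus(\phi[G]\setminus\{x\})$ as a disconnected space, so $\phi[G]\setminus\{x\}$ separates $S$. Since $\phi[G]\setminus\{x\}$ is a proper subset of $\phi[G]$, this contradicts the minimality of $\phi[G]$ as a separating set, completing the proof.

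The only subtle step is the local argument that both sides of the arc $\phi(e)$ near $x$ lie in the same region, which relies on the fact that an embedding of a graph into a surface admits disk neighborhoods around interior edge points in which $\phi[G]$ is a single simple arc; this justifies the local splitting of $U$ into two half-disks and is where the ``embedding'' condition (rather than merely ``subset'') is used.
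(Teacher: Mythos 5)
Your proof is correct and follows essentially the same route as the paper: assume for contradiction that $\phi(e)$ borders only one region, remove an interior point $x$ of $\phi(e)$, and observe that $S\setminus(\phi[G]\setminus\{x\})$ remains disconnected, contradicting minimality. Your local half-disk argument showing that $f_1\cup\{x\}$ and $f_2$ form a separation simply makes explicit what the paper asserts in one line (that deleting $x$ does not decrease the number of components), so it is a welcome elaboration rather than a different method.
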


\begin{proof}
Suppose $\phi$ is a minimal separating embedding of a graph $G$ into a surface $S$.
It is clear that for every edge $e$, $\phi(e)$ must be in the boundary
of at least one region. Suppose, by way of contradiction, that for
some $e\in E$, $\phi(e)$ were in the boundary of exactly one of
the two regions of $\phi$. Then removal of an interior point $x$
of $\phi(e)$ would not decrease the number of connected components.
That is, $S\setminus\left(\phi[G]\setminus\{x\}\right)$ has at least
as many connected components as $S\setminus\phi[G]$. Hence $\phi([G]\setminus\{x\})$
is a separating set of $S$, and it follows that $\phi[G]$ is not
\emph{minimally} separating.
\end{proof}

\begin{thm}
\label{thm:T1}
A graph embedding is a minimal separating embedding
if and only if the following conditions are satisfied:
\begin{enumerate}
\item The graph has no isolated vertices.
\item The embedding has exactly two regions.
\item The image of each edge is in the boundary of both regions of the embedding.
\end{enumerate}
\end{thm}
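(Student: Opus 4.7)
The plan is to prove both directions of the biconditional. The forward direction is essentially Lemmas \ref{lem:L1} and \ref{lem:L2} together with a short argument for Condition 1, while the reverse direction requires a local analysis at every point of $\phi[G]$.

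Suppose first that $\phi$ is a minimal separating embedding. Condition 2 is exactly Lemma \ref{lem:L1} and Condition 3 is exactly Lemma \ref{lem:L2}. For Condition 1, I argue by contrapositive: if $v$ is an isolated vertex of $G$, then $\phi(v)$ is an isolated point of $\phi[G]$, and a sufficiently small punctured disk around $\phi(v)$ is a connected subset of $S\setminus\phi[G]$, so it lies in exactly one of the two regions. Consequently $\phi[G]\setminus\{\phi(v)\}$ differs from $\phi[G]$ only by absorbing the single point $\phi(v)$ into that one region while leaving the other region untouched, so $\phi[G]\setminus\{\phi(v)\}$ still separates $S$, contradicting minimality.

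Conversely, assume Conditions 1--3. Separation is immediate from Condition 2, so I focus on minimality. For any $M\subsetneq\phi[G]$, write
\[
 S\setminus M \;=\; f_1\cup f_2\cup(\phi[G]\setminus M),
\]
where $f_1,f_2$ are the two regions of $\phi$. The crux is the following local claim: every point $y\in\phi[G]\setminus M$ lies in the closures of both $f_1$ and $f_2$, and there are paths from $y$ into each region whose interiors lie entirely in that region. For $y$ interior to an edge image $\phi(e)$, this is immediate from Condition 3 applied to $\phi(e)$. For $y=\phi(v)$ a vertex, Condition 1 supplies at least one incident edge; a sufficiently small disk around $y$ is cut by the finitely many incident edge images into open sectors, and Condition 3 forces adjacent sectors to lie in opposite regions, so both $f_1$ and $f_2$ appear among the sectors and radial paths into them are available. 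Picking any $x\in\phi[G]\setminus M$ (which exists because $M$ is proper) and concatenating the local paths at $x$ produces a path from $f_1$ through $x$ to $f_2$ inside $S\setminus M$; combined with the path-connectedness of $f_1$ and $f_2$ and the local paths at every other point of $\phi[G]\setminus M$, this shows $S\setminus M$ is path-connected, so $M$ does not separate $S$.

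I expect the main delicacy to lie in the vertex case of the local claim, where the finiteness of the triangulation must be invoked to ensure that a small enough neighborhood of any vertex decomposes into finitely many open sectors with a consistent alternating region assignment around the vertex (an incidental byproduct being that Conditions 1 and 3 together force every vertex of $G$ to have even, positive degree, matching the remark preceding the theorem about $\gamma_-$).
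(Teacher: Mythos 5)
Your proof is correct, and your forward direction coincides with the paper's: conditions 2 and 3 are exactly Lemmas \ref{lem:L1} and \ref{lem:L2}, and your contrapositive argument for condition 1 (an isolated vertex $\phi(v)$ has a punctured-disk neighborhood inside a single region, so deleting $\phi(v)$ from $\phi[G]$ still leaves a separating set) fills in what the paper dismisses as ``immediate.'' Where you genuinely diverge is the converse. The paper verifies minimality only for single-point deletions: it picks $x\in\phi[G]$, notes $x$ lies on some edge by condition 1 and is in the boundary of both regions by condition 3, and concludes that $S\setminus\left(\phi[G]\setminus\{x\}\right)$ is connected. Strictly speaking, minimality requires that \emph{no} proper subset $M\subsetneq\phi[G]$ separates, and non-separation is not monotone under shrinking the removed set, so the paper's argument has an implicit step; the missing bridge is exactly your local claim, namely that every point of $\phi[G]\setminus M$ is path-accessible from both regions, which yields path-connectedness of $S\setminus M$ for an \emph{arbitrary} proper subset $M$. (One can also close the paper's gap more softly: taking $x\in\phi[G]\setminus M$, one has $S\setminus\left(\phi[G]\setminus\{x\}\right)\subseteq S\setminus M\subseteq\overline{S\setminus\left(\phi[G]\setminus\{x\}\right)}$, since an embedded graph is nowhere dense, and a set sandwiched between a connected set and its closure is connected.) Your vertex-sector analysis is also sound --- the two sides of any edge must lie in different regions, else that edge would fail condition 3, so sectors alternate around a vertex --- and it is essentially the argument the paper deploys separately in Lemma \ref{lem:L5} to prove all degrees are even, which is why your parenthetical remark about even degree drops out for free. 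In short: same skeleton, but your converse is the more complete argument, proving the needed statement for all proper subsets at once, while the paper's is shorter at the cost of leaving the passage from point-deletions to arbitrary subsets implicit.
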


\begin{proof}
Suppose $\phi$ is a minimal separating embedding of $G$ into $S$.
Condition 1 is immediate. Lemmas \ref{lem:L1} and \ref{lem:L2} demonstrate conditions 2
and 3, respectively. So the forward direction is proved.

Now suppose $\phi$ is an embedding of $G$ into $S$ that satisfies the
three conditions. Since condition 2 is satisfied, we know $\phi[G]$
separates $S$. To show that $\phi[G]$ is a \emph{minimal} separating
set, let $x\in\phi[G]$. Condition 1 implies that $x\in\phi[e]$ for
some $e\in V$. By condition 3 we know that $x$ is in the boundary
of both regions of the embedding. It follows that $S\setminus(\phi[G]\setminus\{x\})$
is connected.
\end{proof}

\begin{lem}
\label{lem:L3}
Suppose $\phi$ is a minimal separating embedding of $G$ into $S$.
Then $|E|-|V|\le2g$ where $g$ is the genus of $S$.
\end{lem}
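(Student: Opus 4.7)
The plan is to combine Euler's inequality, as quoted at the top of this section, with the region count already established in Lemma \ref{lem:L1}. Specifically, for any embedding $\phi$ of a graph $G$ into a surface $S$ of genus $g$, we have
\[
|V| - |E| + |R| \ge 2 - 2g,
\]
where $R$ is the set of regions of the embedding. This is the general form of Euler's theorem for embeddings in surfaces; equality holds precisely for 2-cell embeddings, but we only need the inequality here.

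Next, since $\phi$ is a \emph{minimal separating} embedding, Lemma \ref{lem:L1} tells us that $|R| = 2$. Substituting this into the Euler inequality gives
\[
|V| - |E| + 2 \ge 2 - 2g,
\]
which rearranges immediately to $|E| - |V| \le 2g$, as claimed.

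There is essentially no obstacle here: the argument is a one-line reduction from Euler's theorem once the region count is pinned down. The only subtlety worth pointing out in the write-up is that one does \emph{not} need to verify that the two regions are disks (i.e., that the embedding is cellular); the general Euler inequality suffices, and this is exactly the version quoted at the start of the section. Thus the proof reduces to citing the Euler inequality, invoking Lemma \ref{lem:L1}, and rearranging.
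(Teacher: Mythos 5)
Your proposal is correct and matches the paper's proof exactly: both invoke the Euler inequality $|V|-|E|+|R|\ge 2-2g$ stated at the start of the section, substitute $|R|=2$ from Lemma \ref{lem:L1}, and rearrange. Your added remark that cellularity of the regions is not needed is a fair clarification but does not change the argument.
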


\begin{proof}
By Euler's theorem and lemma \ref{lem:L1}, $|V|-|E|+2\ge2-2g$.
Thus $|E|-|V|\le2g$ as desired.
\end{proof}

\begin{defn}
\label{def:isolated-loop}
A graph homeomorphic to the graph consisting of one vertex with degree 2 and one edge
(the cycle on one vertex) is called an \emph{isolated loop}.\end{defn}

\begin{lem}
\label{lem:L4}
 For all $g$, and for all $G\in\mathbb{G}_{g}$, if
$v$ is a vertex of $G$ and $deg(v)=2$, then $v$ is an isolated loop.
\end{lem}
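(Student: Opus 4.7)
The plan is to exploit the fact, built into Definition \ref{def:graph-underlying-M}, that every $G\in\mathbb{G}_{g}$ is homeomorphically irreducible, and then to prove the purely graph-theoretic statement that a homeomorphically irreducible graph with a degree-$2$ vertex $v$ must have $v$ carrying a loop and no other incident edges, so that the connected component of $v$ is precisely an isolated loop in the sense of Definition \ref{def:isolated-loop}. Notably, the \emph{minimality} of the separating set plays no role in the argument; only the homeomorphic-irreducibility output of Definition \ref{def:graph-underlying-M} is used.

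I will split on the type of edges incident to $v$. If $v$ is incident to a loop, then by Definition \ref{def:degree} that loop alone already accounts for both edge-ends at $v$, so no other edge meets $v$; hence no vertex other than $v$ lies in the component of $v$, and that component is the one-vertex, one-edge graph, i.e.\ an isolated loop.

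Otherwise, both edges incident to $v$ are non-loops, say $e_{1}=\{u_{1},v\}$ and $e_{2}=\{u_{2},v\}$ with $u_{1},u_{2}\neq v$ (and possibly $u_{1}=u_{2}$). I will perform the inverse of an edge subdivision at $v$: form a new graph $G'$ by deleting $v$ together with $e_{1}$ and $e_{2}$, and inserting either the single non-loop edge $\{u_{1},u_{2}\}$ when $u_{1}\neq u_{2}$, or a single loop at $u_{1}=u_{2}$ when they coincide. In both sub-cases $G$ is a subdivision of $G'$, so $G$ and $G'$ are graph-homeomorphic. However, $|V(G')|=|V(G)|-1$, which forces $G'\neq G$ and prevents $G'$ from being a subdivision of $G$. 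This contradicts Definition \ref{def:homeomorphically-irreducible}, so this case cannot occur.

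The only mildly delicate point will be the parallel-edge sub-case $u_{1}=u_{2}$: I will need to check that subdividing a loop in $G'$ legitimately recovers the digon $u_{1}\,e_{1}\,v\,e_{2}\,u_{1}$ sitting in $G$, so that $G$ really is a subdivision of $G'$ in that situation. Once that convention is in place, the contradiction with homeomorphic irreducibility applies uniformly and the lemma follows.
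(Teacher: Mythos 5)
Your proof is correct and takes essentially the same approach as the paper's: both rest on the fact that every $G\in\mathbb{G}_{g}$ is homeomorphically irreducible and derive a contradiction by smoothing the degree-$2$ vertex (the inverse of subdivision). The paper's version is simply terser, leaving your explicit loop/non-loop case split and the digon subtlety implicit.
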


\begin{proof}
Suppose $g\ge0$, and that $G\in\mathbb{G}_{g}$. Let
$v$ be a vertex of $G$ such that $deg(v)=2$, and suppose that $v$
is not an isolated loop. Then $v$ is incident to precisely two edges,
and so $v$ can be smoothed. Thus $G$ is not homeomorphically irreducible.
Hence $G\notin\mathbb{G}_{g}$.
\end{proof}

\begin{lem}
\label{lem:L5}
If $G\in\mathbb{G}$, then by Lemma all vertices of $G$ have even degree.
\end{lem}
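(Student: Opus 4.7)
The plan is to show that the parity of $\deg(v)$ is forced by a local 2-coloring argument around each vertex using Theorem~\ref{thm:T1}. Fix a minimal separating embedding $\phi$ of $G$ into a surface $S$ (such a $\phi$ exists because $G \in \mathbb{G}$), and let $v \in V(G)$ with $d = \deg(v)$. Since $S$ is a 2-manifold, $\phi(v)$ has a neighborhood homeomorphic to an open disk $D$, and we may choose $D$ small enough that $D \cap \phi[G]$ consists of $\phi(v)$ together with $d$ disjoint arcs emanating from $\phi(v)$ to $\partial D$ (one for each edge-end at $v$, counting each loop at $v$ twice). Removing these arcs from $D$ leaves exactly $d$ open sectors $\sigma_1, \ldots, \sigma_d$ arranged cyclically around $\phi(v)$.

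Next I would argue that each sector $\sigma_i$ lies entirely in one of the two regions. By Lemma~\ref{lem:L1} there are only two regions $R_1, R_2$ of the embedding, and each $\sigma_i$ is connected and disjoint from $\phi[G]$, hence contained in one region. This gives a map $c : \{\sigma_1,\ldots,\sigma_d\} \to \{R_1,R_2\}$.

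The key step is to show that consecutive sectors must have different $c$-values. Two cyclically consecutive sectors $\sigma_i$ and $\sigma_{i+1}$ share (part of) an arc that is interior to $\phi(e)$ for some edge $e$ incident to $v$. By condition 3 of Theorem~\ref{thm:T1}, $\phi(e)$ lies in the boundary of both regions, and locally near an interior point of this arc the two sides belong to different regions (this is where the 2-manifold structure is essential — each interior point of $\phi(e)$ has a disk neighborhood which is split by $\phi(e)$ into exactly two half-disks, one in each region). Hence $c(\sigma_i) \neq c(\sigma_{i+1})$.

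Finally, going all the way around $\phi(v)$ returns to the starting sector, so the alternating 2-coloring $c$ on a cycle of length $d$ must be consistent, which forces $d$ to be even. The only mild subtlety I anticipate is the loop case: if $e$ is a loop at $v$, both of its edge-ends contribute arcs in $D$ and so the alternation argument still applies edge-end by edge-end rather than edge by edge; this is already built into our definition of degree as the total number of edge-ends. This completes the argument that every vertex of $G$ has even degree.
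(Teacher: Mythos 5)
Your proof is correct and follows essentially the same route as the paper's: both arguments look at the $\deg(v)$ sectors into which the edge-ends divide a small disk around $\phi(v)$, assign each sector to one of the two regions (Lemma~\ref{lem:L1}), and use the fact that every edge lies in the boundary of both regions (Lemma~\ref{lem:L2}, i.e.\ condition 3 of Theorem~\ref{thm:T1}) to force the region labels to alternate around the vertex, which is impossible for an odd number of sectors. Your write-up is simply a more detailed version, making explicit the local half-disk structure at interior points of an edge that the paper leaves implicit.
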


\begin{proof}
Let $v$ be a vertex and consider a neighborhood around
$\phi(v)$. The edge-ends emanating from $v$ divide the neighborhood
into $deg(v)$ subdivisions. By Lemma \ref{lem:L1}, there are exactly 2 regions
of $\phi$ and each of these subdivisions must be a subset of exactly
one of the two regions. By Lemma \ref{lem:L2}, each of the edge ends must be
in the boundary of both regions. If $deg(v)$ were odd then there
would exist an edge end emanating from $v$ that was in the boundary
of only one of the two regions. Hence $deg(v)$ is not odd.
\end{proof}

In what follows we denote the set of isolated loops of a graph $G$ by $B$.
If $G$ has no isolated loops then $B=\emptyset$. The graph $G$ minus the isolated loops
is denoted by $G\setminus B$, and its set of vertices and set of edges are denoted by
$V(G\setminus B)$ and $E(G\setminus B)$, respectively. The number of components
of $B$ will be denoted by $|B|$.

\begin{lem}
\label{lem:L6}
For all $g\ge0$, and for all $G\in\mathbb{G}_{g}$, $2|V(G\setminus B)|\le|E(G\setminus B)|$.
\end{lem}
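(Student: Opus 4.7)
The plan is to use the handshake lemma after establishing a uniform lower bound on the vertex degrees in $G\setminus B$. The key observation is that removing the isolated loops does not affect the degree of any vertex in $V(G\setminus B)$, since each isolated loop is a separate connected component (a single vertex with a loop attached, contributing nothing to the rest of $G$). Therefore, for every $v\in V(G\setminus B)$, $\deg_{G\setminus B}(v)=\deg_G(v)$.

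Next I would invoke the two preceding lemmas. By Lemma \ref{lem:L5}, every vertex of $G$ has even degree. By Lemma \ref{lem:L4}, any vertex of $G$ with degree exactly $2$ must itself be an isolated loop, and so does not lie in $V(G\setminus B)$. Degree zero is also excluded, since by Theorem \ref{thm:T1}(1) a minimal separating graph has no isolated vertices. Combining these three facts, every vertex $v\in V(G\setminus B)$ has even degree, is nonzero, and is not $2$, hence $\deg_{G\setminus B}(v)\ge 4$.

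Finally, I would apply the handshake lemma to $G\setminus B$:
\begin{equation*}
2|E(G\setminus B)| \;=\; \sum_{v\in V(G\setminus B)} \deg_{G\setminus B}(v) \;\ge\; 4|V(G\setminus B)|,
\end{equation*}
which rearranges to $2|V(G\setminus B)|\le |E(G\setminus B)|$, as desired.

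There is no real obstacle here; the argument is essentially a one-line consequence of Lemmas \ref{lem:L4} and \ref{lem:L5}. The only point that warrants care is the verification that stripping off the isolated loops leaves every other vertex with the same (even, $\ge 4$) degree, which follows because each isolated loop is a full connected component of $G$.
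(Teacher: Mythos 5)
Your proof is correct and follows essentially the same route as the paper's: establish $\deg(v)\ge 4$ for every vertex of $G\setminus B$ via Lemmas \ref{lem:L4} and \ref{lem:L5} together with the absence of isolated vertices, then apply the handshake lemma. Your explicit check that deleting the isolated loops (which are whole connected components) leaves the remaining degrees unchanged is a point the paper passes over silently, but otherwise the arguments coincide.
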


\begin{proof}
Let $g\ge0$, and let $G\in\mathbb{G}_{g}$. Let $B$
be the set of isolated loops of $G$. By Lemmas \ref{lem:L4} and \ref{lem:L5}, and the observation
that minimal separating graphs do not contain vertices of degree zero, we have that
$deg(v)\ge4$ for all $v\in V(G\setminus B)$. Thus we have
$4|V(G\setminus B)|+2|B|\le\left[\sum_{v\in V\setminus B}deg(v)\right]+2|B|=2|E(G\setminus B)|+2|B|$.
Thus $2|V(G\setminus B)|\le|E(G\setminus B)|$.
\end{proof}

\begin{lem}
\label{lem:L6.5}
Let $B$ be the set of isolated loops of a graph $G \in\mathbb{G}_{g}$.
Either the graph $G\setminus B$ is empty or it is a minimal
separating graph of a surface of genus $g-|B|$, where $|B|$ is the number of components of $B$.
\end{lem}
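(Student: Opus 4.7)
The plan is to induct on $|B|$. The base case $|B|=0$ is trivial, since $G\setminus B=G$ is itself a minimal separating graph of $S$ of the correct genus $g-|B|=g$. In the inductive step, if $G\setminus B=\emptyset$ we are done; otherwise I pick any $\ell\in B$, prove $\ell$ is non-separating in $S$, perform surgery along $\ell$ to produce a minimal separating embedding of $G\setminus\ell$ into a closed surface of genus $g-1$ with $|B|-1$ isolated loops, and then invoke the inductive hypothesis.

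The crux is showing that, under the assumption $G\setminus B\neq\emptyset$, every isolated loop $\ell\in B$ is non-separating in $S$. Suppose for contradiction that $\ell$ separates $S$ into open pieces $S_a$ and $S_b$. By Lemma~\ref{lem:L1} the embedding has exactly two regions $R_1,R_2$, each a connected subset of $S\setminus\phi[G]\subset S\setminus\ell=S_a\sqcup S_b$, so each $R_i$ lies in one of $S_a,S_b$; since no open piece of $S$ can be swallowed by the 1-complex $\phi[G]$, each of $S_a,S_b$ must contain at least one region, and with only two regions this forces (up to relabeling) $R_1\subset S_a$ and $R_2\subset S_b$. Because $G\setminus B\neq\emptyset$, the graph $G\setminus\ell$ contains some edge $e$, and $\phi(e)$ lies entirely in one of $S_a,S_b$, say $S_a$. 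But then every sufficiently small neighborhood of $\phi(e)$ stays in $S_a$, so $R_2\subset S_b$ cannot have $\phi(e)$ on its boundary, contradicting Lemma~\ref{lem:L2}.

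With $\ell$ non-separating established, cut $S$ along $\ell$ to obtain a connected surface with boundary $\tilde S$ whose two boundary circles $\ell^+$ and $\ell^-$ lie in the closures of $R_1$ and $R_2$ respectively, then cap each with a disk $D^+,D^-$ to form a closed surface $S_1$ of genus $g-1$. The restriction of $\phi$ to $G\setminus\ell$, regarded as an embedding into $S_1$, satisfies the three conditions of Theorem~\ref{thm:T1}: $G\setminus\ell$ has no isolated vertices; $S_1\setminus\phi[G\setminus\ell]$ has exactly the two components $R_1\cup\ell^+\cup D^+$ and $R_2\cup\ell^-\cup D^-$; and every edge of $G\setminus\ell$, already on the boundary of both $R_1$ and $R_2$ in $S$, remains on the boundary of both new components. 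Hence $G\setminus\ell\in\mathbb{G}_{g-1}$ with $|B|-1$ isolated loops, and the inductive hypothesis gives that $(G\setminus\ell)\setminus(B\setminus\{\ell\})=G\setminus B$ is a minimal separating graph of a surface of genus $(g-1)-(|B|-1)=g-|B|$, as required. I expect the main obstacle to be the non-separating claim; the rest is routine bookkeeping via Theorem~\ref{thm:T1}.
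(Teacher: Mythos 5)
Your proof is correct and follows the same surgery-plus-induction skeleton as the paper: remove one isolated loop at a time, cut the surface along it, cap the two resulting boundary circles with disks, check that the three conditions of Theorem \ref{thm:T1} are inherited, and recurse. The one place you genuinely diverge is the crucial step. The paper dispatches it with a dichotomy on contractibility: if $\phi[L]$ is contractible it separates $S$, forcing $G\setminus L=\emptyset$; otherwise it ``must circumnavigate a handle.'' As written this has a gap, since on a surface of genus $\ge 2$ a non-contractible simple closed curve can still be separating (a genus-splitting curve), in which case the proposed surgery would disconnect the surface rather than lower its genus. Your argument closes exactly this gap: you prove directly from Lemmas \ref{lem:L1} and \ref{lem:L2} that an isolated loop $\ell$ with $G\setminus B\neq\emptyset$ cannot separate $S$, because the two regions would then be trapped on opposite sides of $\phi[\ell]$, and any edge of $G\setminus\ell$ could border only one of them, contradicting Lemma \ref{lem:L2}. (There is an even shorter route to the same conclusion: a separating $\phi[\ell]$ would be a proper subset of $\phi[G]$ that separates $S$, contradicting minimality outright.) So your proposal buys a tighter justification of the non-separating claim, while the paper's version is briefer but leans on an assertion that is false for general non-contractible curves; otherwise the two proofs coincide.
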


\begin{proof}
Suppose $G$ is a minimal separating graph for a surface
$S$ of genus $g$, and let $\phi$ be a minimal separating
embedding of $G$ into $S$. It will suffice to show that if $L$ is any isolated loop
of $G$, and $G\setminus L$ is not empty, then there exists an embedding
of $G\setminus L$ into a surface of genus $g-1$.

Let $L$ be any isolated loop of $G$. Either $\phi[L]$ is simple (ie contractible to a point),
or it is not. If $\phi[L]$ is contractible to a point, then $L$ separates $S$,
and it follows that $G\setminus L$ is empty.

Suppose $G\setminus L$ is not empty. Then $\phi[L]$ is not simple.
So $\phi[L]$ must circumnavigate a handle
of $S$. We will construct a minimal separating embedding of the graph
$G\setminus L$ into a surface of genus $g-1$ using surgery.

Note that $\phi$ satisfies the three conditions of theorem \ref{thm:T1}.
A closed neighborhood of $\phi[L]$ is a cylinder $C$. Remove $C$ from $S$.
Since the Euler characteristic of a cylinder is zero, we have $\chi_S=\chi_{S\backslash C}$.
Note that by condition
3 of theorem \ref{thm:T1}, this has the result of creating exactly two boundary
components (one in each region of $S\phi[G]$. Glue one disk into each
of the two components. Gluing in a disk adds 1 to the characteristic of the surface.
What remains is an embedding of the graph $G\setminus L$
into a surface of surface of genus $g-1$. Note that this embedding
is a minimal separating embedding because it satisfies the three requirements
of theorem 1 -- for it inherited these properties from $\phi$.
\end{proof}

Colloquially, the above lemma states that, in minimal separating graphs,
the only role played by isolated loops is the role of handle-cutter.
The sole exception is the contractible loop. And in that case the
isolated loop is the minimal separating graph itself.

\begin{lem}
\label{lem:L6.75}
For all $g$, and for all $G\in\mathbb{G}_{g}$,
if $B$ is the set of isolated loops of $G$ then $|B|\le g+1$.
\end{lem}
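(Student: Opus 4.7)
The plan is to prove this by induction on $|B|$, peeling off isolated loops one at a time using Lemma \ref{lem:L6.5}. The base case $|B|=0$ is immediate, since $g \geq 0$ gives $0 \leq g+1$ trivially.

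For the inductive step, suppose the bound holds for all minimal separating graphs with strictly fewer isolated loops. Let $G \in \mathbb{G}_g$ with $|B| \geq 1$, and pick any isolated loop $L \in B$. I would invoke the one-loop version of Lemma \ref{lem:L6.5} that is actually established inside its proof (the sentence ``it will suffice to show that if $L$ is any isolated loop of $G$, and $G\setminus L$ is not empty, then there exists an embedding of $G\setminus L$ into a surface of genus $g-1$''): removing a single isolated loop $L$ yields a graph $G\setminus L$ that is either empty or a minimal separating graph of a surface of genus $g-1$.

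If $G \setminus L$ is empty, then $G = L$ is itself a single isolated loop, so $|B| = 1$ and the bound $|B| \leq g+1$ holds for every $g \geq 0$. Otherwise $G \setminus L \in \mathbb{G}_{g-1}$ (which in particular forces $g \geq 1$), and since removing the component $L$ leaves every other connected component of $G$ unchanged, the isolated loops of $G \setminus L$ are exactly $B \setminus \{L\}$. The inductive hypothesis applied to $G \setminus L$ yields $|B| - 1 \leq (g-1) + 1 = g$, so $|B| \leq g+1$.

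There is no real obstacle; the induction is powered entirely by Lemma \ref{lem:L6.5}. The source of the ``$+1$'' slack in the bound is exactly the bottom of the recursion, where $G = L$ is a single isolated loop. In that terminal case the loop can minimally separate a surface of any genus $\geq 0$ (as a separating simple closed curve, contractible when $g=0$), so the recursive peeling stops one step before the genus-reduction count would otherwise run out, producing the extra unit in the bound.
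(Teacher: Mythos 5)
Your proof is correct, but it routes through Lemma \ref{lem:L6.5} differently than the paper does. The paper's proof is a one-shot argument: implicitly assuming $|B|\ge g$ (otherwise the bound is trivial), it strips off a set $\tilde B$ of $g$ isolated loops, so that by Lemma \ref{lem:L6.5} the remainder $G\setminus\tilde B$, if nonempty, minimally separates the sphere, and it then invokes the genus-$0$ classification (the sphere's unique minimal separating set is a simple closed curve) to conclude that the remainder is a single isolated loop, whence $|B|\le g+1$. Your induction peels one loop at a time and never uses the genus-$0$ classification at all: the ``$+1$'' comes instead from the terminal alternative in Lemma \ref{lem:L6.5}, the case $G\setminus L=\emptyset$, which as you note forces $G=L$ and $|B|=1$. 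What your route buys is self-containedness --- you do not need to import from \cite{Bernhard/Veerman} the fact that $\mathbb{G}_0$ consists of a single circle --- and explicit treatment of the edge cases the paper glosses over (what happens when $|B|<g$, or when the remainder after loop removal is empty). What the paper's route buys is brevity; note, though, that it too silently relies on the one-loop form of Lemma \ref{lem:L6.5} (applying it to the proper subset $\tilde B$ of $B$), which is exactly the reading of that lemma's proof that you make explicit. Your two bookkeeping claims --- that the single-loop statement is what the proof of Lemma \ref{lem:L6.5} actually establishes, and that the isolated loops of $G\setminus L$ are exactly $B\setminus\{L\}$ because an isolated loop is an entire connected component --- are both sound, so the induction goes through.
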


\begin{proof}
Suppose $G\in\mathbb{G}_{g}$, and $\tilde B$ is a set of $g$
isolated loops of $G$. Then by Lemma \ref{lem:L6.5}, the graph
$G\setminus \tilde B$ has a minimal separating embedding in a surface
of genus 0 (a sphere).
Thus $G\setminus \tilde B$ must be an isolated loop.
\end{proof}

\begin{lem}
\label{lem:L6.85}
Let $G\in\mathbb{G}_{g}$,$\phi$ and suppose that $G$ contains a collection of
$g+1$ non-intersecting cycles.
Then $G$ is the graph consisting of $g+1$ isolated loops.
\end{lem}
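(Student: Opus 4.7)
The plan is to argue by strong induction on $g$, using Lemma \ref{lem:L6.5} as the key reduction step: deleting the isolated loops of a minimal separating graph either empties the graph or produces a minimal separating graph in a surface whose genus has dropped by the number of loops removed. The base case $g=0$ is immediate because $\mathbb{G}_0$ contains only the isolated loop itself, which is $g+1=1$ isolated loop.

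For the inductive step, fix $G\in\mathbb{G}_g$ with non-intersecting cycles $C_1,\dots,C_{g+1}$, let $B$ be the set of isolated loops of $G$, and write $b=|B|$. If $G\setminus B=\emptyset$ then $G$ is already a disjoint union of $b$ isolated loops; the hypothesis forces $b\ge g+1$, while Lemma \ref{lem:L6.75} gives $b\le g+1$, so $b=g+1$ and we are done.

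The remaining task is to rule out the case $G\setminus B\ne\emptyset$. In that case Lemma \ref{lem:L6.5} places $G\setminus B$ in $\mathbb{G}_{g-b}$, in particular $b\le g$. The key observation is that every cycle $C_i$ which is not itself an isolated loop of $G$ must lie entirely inside $G\setminus B$: its vertices either carry two non-loop cycle-edges (if the cycle has length at least two) or a loop plus an additional incident edge (if the cycle is a loop at a non-isolated-loop vertex), so each such vertex has degree at least three, hence at least four by Lemma \ref{lem:L5}, and therefore avoids $B$ by Lemma \ref{lem:L4}. Since at most $b$ of the $C_i$ can themselves be isolated loops of $G$, at least $(g-b)+1$ of them are non-intersecting cycles inside $G\setminus B\in\mathbb{G}_{g-b}$. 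Invoking the inductive hypothesis on any $(g-b)+1$ of these forces $G\setminus B$ to be $(g-b)+1$ isolated loops, contradicting the fact that $G\setminus B$ has been stripped of all its isolated loops. The only subtle point is the vertex-degree argument that localizes the non-loop cycles inside $G\setminus B$; everything else is bookkeeping of the case analysis.
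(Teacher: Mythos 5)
Your induction has a genuine gap: the inductive step is circular precisely when $G$ has no isolated loops. If $b=|B|=0$, then $G\setminus B=G$ and Lemma \ref{lem:L6.5} places $G\setminus B$ in $\mathbb{G}_{g-b}=\mathbb{G}_{g}$ --- the genus does not drop, so "invoking the inductive hypothesis" on $G\setminus B$ is invoking the very statement you are trying to prove, at the same genus. Your argument only derives a contradiction in the case $b\ge 1$ (where the reduction to $\mathbb{G}_{g-b}$ with $g-b<g$ is legitimate, and the conclusion that the stripped graph consists of isolated loops is indeed absurd). But the case $b=0$ is not vacuous; it is the heart of the lemma. Minimal separating graphs without isolated loops are common (e.g., two vertices joined by four edges, in $\mathbb{G}_1$), and for such a graph the lemma asserts the nontrivial fact that it cannot contain $g+1$ pairwise disjoint cycles. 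Your reduction machinery, which only sees isolated loop \emph{components} of the graph, has no way to exploit disjoint cycles that are not components, so the induction stalls exactly where the real content lies. (The localization step --- that cycles which are not isolated loops of $G$ live in $G\setminus B$ --- is essentially fine, modulo a small imprecision about degree-2 vertices, but it feeds into an application of the hypothesis that is not available.)

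The paper's proof avoids this by working on the surface rather than on the graph: it cuts out a closed neighborhood of $g$ of the embedded cycles (whether or not they are isolated loops of $G$), caps the resulting boundary components with disks to obtain a sphere, and then observes that the one remaining cycle separates that sphere. Hence the union of the $g+1$ cycles already separates $S$, and minimality of $G$ as a separating set forces $G$ to equal that union, i.e., $g+1$ isolated loops. The essential idea you are missing is this surgery along arbitrary embedded cycles: genus reduction comes from cutting the surface along any non-separating embedded circle, not only from deleting isolated loop components via Lemma \ref{lem:L6.5}. To repair your proof you would need to supply exactly this kind of argument in the $b=0$ case, at which point the induction becomes superfluous.
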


\begin{proof}
As in the proof of lemma \ref{lem:L6.5}, remove a closed neighborhood $C$ of $g$
of the non-intersecting cycles from the surface $S$ and glue to the resulting boundary
components of $S\backslash C$. The resulting surface is a sphere. Since $G$ contained
another non-intersecting cycle, the sphere also does, separating the sphere.
It follows that the original $g+1$ cycles in $G$ separate $S$. By minimality
$G$ must be equal to that collection of cycles.
\end{proof}

\begin{lem}
\label{lem:L7}
For all $g\ge1$, and for all $G\in\mathbb{G}_{g}$,
if $B$ is the set of isolated loops of $G$ and $G\setminus B$ is not empty,
then $0\le|V(G\setminus B)|\le2g-2|B|$.
\end{lem}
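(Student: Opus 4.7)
The plan is to combine the structural inequalities already established (Lemmas \ref{lem:L3} and \ref{lem:L6}) with the ``peel-off isolated loops'' reduction from Lemma \ref{lem:L6.5}. The lower bound $0 \le |V(G\setminus B)|$ is immediate from the definition, so the real content is the upper bound $|V(G\setminus B)| \le 2g - 2|B|$.

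First, I would write $G' := G\setminus B$ and note that $G'$ is non-empty by hypothesis. Since $G'$ is obtained from $G$ by removing all of its isolated loops, $G'$ itself has no isolated loops. By Lemma \ref{lem:L6.5}, $G'$ is a minimal separating graph of an orientable surface of genus $g' := g - |B|$, i.e.\ $G' \in \mathbb{G}_{g'}$.

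Next, I would apply Lemma \ref{lem:L3} to $G'$ embedded as a minimal separating set in the genus $g'$ surface to obtain
\begin{equation*}
|E(G')| - |V(G')| \le 2g' = 2g - 2|B|.
\end{equation*}
At the same time, applying Lemma \ref{lem:L6} to $G'$ (whose set of isolated loops is empty, so ``$G' \setminus B' = G'$'') gives $2|V(G')| \le |E(G')|$. Substituting the second inequality into the first yields
\begin{equation*}
2|V(G')| - |V(G')| \le |E(G')| - |V(G')| \le 2g - 2|B|,
\end{equation*}
that is, $|V(G\setminus B)| \le 2g - 2|B|$, as required.

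I do not anticipate a serious obstacle: the argument is a short bookkeeping combination of already-proved lemmas. The only point requiring a little care is justifying that Lemma \ref{lem:L6} can be applied to $G'$ in the form $2|V(G')| \le |E(G')|$ (rather than in its original form for graphs that may have isolated loops). This is fine because, by construction, $G'$ has no isolated loops and is non-empty, so every vertex of $G'$ has degree at least $4$ by Lemmas \ref{lem:L4} and \ref{lem:L5}, which is exactly what drives the proof of Lemma \ref{lem:L6}.
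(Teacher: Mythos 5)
Your proof is correct and follows essentially the same route as the paper: peel off the isolated loops via Lemma \ref{lem:L6.5} to get $G\setminus B\in\mathbb{G}_{g-|B|}$, then combine Lemma \ref{lem:L3} with Lemma \ref{lem:L6} to obtain $|V(G\setminus B)|\le|E(G\setminus B)|-|V(G\setminus B)|\le 2(g-|B|)$. The only cosmetic difference is that the paper applies Lemma \ref{lem:L6} directly to $G\in\mathbb{G}_g$ (whose conclusion is already stated in terms of $G\setminus B$), whereas you apply it to $G\setminus B$ viewed as an element of $\mathbb{G}_{g-|B|}$; both yield the identical inequality.
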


\begin{proof}
Suppose $g\ge1$, and let $G\in\mathbb{G}_{g}$. Let
$B$ be the set of isolated loops of $G$.

If $G\setminus B$ is not empty then, by lemma \ref{lem:L6.5}, the graph $G\setminus B$
is in $\mathbb{G}_{\hat{g}}$, where $\hat{g}=g-|B|$.
It follows by lemma \ref{lem:L3} that $|E(G\setminus B)|-|V(G\setminus B)|\le2\hat{g}$.
By Lemma \ref{lem:L6} we have $2|V(G\setminus B)|\le|E(G\setminus B)|$. Thus we find $|V(G\setminus B)|\le|E(G\setminus B)|-|V(G\setminus B)|\le2\hat{g}=2(g-|B|)$
as desired.
\end{proof}

\begin{thm}
\label{thm:T2}
For all $g$, $\mathbb{G}_{g}$ is finite.
\end{thm}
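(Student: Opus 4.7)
The plan is to combine the structural bounds established in the previous lemmas into explicit upper bounds on the vertex count and edge count of any graph in $\mathbb{G}_g$, and then invoke the elementary fact that there are only finitely many (multi)graphs on a bounded number of vertices with a bounded number of edges, up to isomorphism.

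Let $G \in \mathbb{G}_g$ and let $B$ denote its set of isolated loops. I would split into two cases. In the first case, $G \setminus B$ is empty, so $G$ consists entirely of isolated loops; by Lemma \ref{lem:L6.75} there are at most $g+1$ of them, which leaves only finitely many possibilities. In the second case, $G\setminus B$ is nonempty. Lemma \ref{lem:L6.75} gives $|B|\le g+1$, Lemma \ref{lem:L7} gives $|V(G\setminus B)|\le 2g-2|B|\le 2g$, and then Lemma \ref{lem:L3} applied to $G\setminus B \in \mathbb{G}_{g-|B|}$ gives $|E(G\setminus B)| \le |V(G\setminus B)| + 2(g-|B|) \le 4g$. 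Adding back the at most $g+1$ isolated loops (each contributing one vertex and one edge), we obtain the uniform bounds
\begin{equation*}
|V(G)| \;\le\; 3g+1, \qquad |E(G)| \;\le\; 5g+1.
\end{equation*}

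To finish, I would observe that the number of isomorphism classes of graphs (in the sense of Definition \ref{def:graph}, i.e.\ allowing loops and multi-edges) with at most $3g+1$ vertices and at most $5g+1$ edges is finite: each such graph is determined, up to isomorphism, by a symmetric $(3g+1)\times(3g+1)$ matrix of nonnegative integers with entries bounded by $5g+1$, of which there are finitely many. Since $\mathbb{G}_g$ injects into this finite set of isomorphism classes, $\mathbb{G}_g$ is itself finite.

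There is no real obstacle here — all the essential work has already been done in Lemmas \ref{lem:L3}, \ref{lem:L6}, \ref{lem:L6.5}, \ref{lem:L6.75}, and \ref{lem:L7}. The only mild subtlety is that one must remember to treat the loop-only graphs separately (since Lemma \ref{lem:L7} is conditioned on $G\setminus B$ being nonempty) and to include the loops when assembling the final vertex/edge bounds on $G$ itself rather than on $G\setminus B$.
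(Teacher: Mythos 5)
Your proposal is correct and follows essentially the same route as the paper: bound $|V(G)|$ via Lemmas \ref{lem:L7} and \ref{lem:L6.75}, bound $|E(G)|$ via Lemma \ref{lem:L3}, and conclude that only finitely many multigraphs satisfy such bounds. If anything, your version is slightly more careful than the paper's, since you explicitly separate the loop-only case (where Lemma \ref{lem:L7} does not apply) and spell out the final finiteness-of-multigraphs step, both of which the paper leaves implicit.
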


\begin{proof}
Fix $g$, and let $G\in\mathbb{G}_{g}$. Let $B$ be
the set of isolated loops of $G$. By lemmas \ref{lem:L7} and \ref{lem:L6.75},
$0\le|V(G)|=|V(G\setminus B)|+|B|\le2g-|B|$.
This puts an upper bound on the size of $|V(G)|$ for any graph $G$
in $\mathbb{G}_{g}$. This, in combination with the statement of Lemma
3, provides an upper bound on $|E|$. In particular, $0<|E(G)|\le2g+|V(G)|$.
Thus, $\mathbb{G}_{g}$ is finite.
\end{proof}

\begin{defn}
\label{def:c_g}
For $g\ge1$ define $\mathbb{C}_{g}$ (the `candidate graphs') to be the
set of all graphs $G$ satisfying the following properties:
\begin{enumerate}
\item If $G\setminus B$ is not empty, then $0\le|V(G)|\le2g-|B|$,
\item $0<|E(G)|\le2g+|V(G)|$
\item All vertices of $G$ have even degree greater than zero
\item The only vertices of $G$ with degree two are isolated
\item $G$ has at most $g$ non-intersecting cycles, unless it equals $g+1$ isolated loops.
\end{enumerate}
We refer to $\mathbb{C}_{g}$ as the \emph{set of candidate graphs}
for genus $g$.
\end{defn}

It is clear that for all $g\ge1$, $\mathbb{C}_{g}$ is finite and
$\mathbb{G}_{g}\subset\mathbb{C}_{g}.$ For low genus surfaces, $\mathbb{C}_{g}$
can be generated quite easily using a contemporary computer. To decide
which members of $\mathbb{C}_{g}$ are also in $\mathbb{G}_{g}$,
we provide a method to test whether a given graph in $\mathbb{C}_{g}$
has a minimal separating embedding in a surface of genus $g$.

\section*{Rotation Systems}

\begin{defn}
\label{def:rotation_at_v}
A graph embedding induces, for each vertex $v$, a cyclic ordering of edges incident to $v$.
This ordering is referred to as the \emph{rotation at $v$}, and is unique up to a cyclic
permutation of the edges. Without loss of generality, we will always consider cyclic
orderings in the {\emph clockwise} direction.
\end{defn}

\begin{defn}
\label{def:rotation_system}
A \emph{rotation system} for a graph $G$ consists of an unordered set $\mathcal{R}$
of the vertices $\{v_i\}$ of $G$ together with a cyclically ordered list of all the
edges $e_j$ that are incident to $v_i$. Note that each edge must occur exactly
twice in a rotation system.
\end{defn}

\noindent
{\bf Remark:} This corresponds to the notion of \emph{pure rotation system} as given in \cite{Gross}.
Note that one graph may have many embeddings, leading to non-equivalent rotation systems.
For instance the graphs in Figure 1 have rotation systems given in the table at the
end of this article as graph \# 1, 2, 3, and 8.

\begin{defn}
\label{def:cellular-embedding}
A graph embedding whose regions are homeomorphic to open disks is called a \emph{cellular} embedding.
\end{defn}

Cellular embeddings with the same rotation system are considered equivalent.
As there are only finitely many possible rotations at each vertex, and only
finitely many vertices in a graph, we know there are a finite number of possible
rotation systems for a given graph. Thus the infinite set of all possible cellular embeddings
of a graph into a surface can be partitioned into equivalence classes of finitely many
rotation systems. The following is taken from \cite{Gross}.

\begin{thm}
\label{thm:unique-embedding}
Every rotation system on a graph $G$ defines a unique (up to orientation preserving homeomorphisms)
locally oriented cellular graph embedding $G\rightarrow S$. Conversely every locally
oriented cellular embedding defines a rotation system.
\end{thm}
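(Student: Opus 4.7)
The plan is to prove the two directions separately. The reverse direction is essentially tautological: if $\phi:G\to S$ is a locally oriented cellular embedding, then a small oriented disk neighborhood of $\phi(v)$ meets the edges incident to $v$ in a cyclic sequence, which, read clockwise with respect to the local orientation, is precisely a rotation at $v$. Collecting these cyclic lists over all vertices gives the required rotation system, well-defined up to a cyclic permutation at each vertex.

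For the forward direction I would construct $S$ explicitly by the ribbon-graph (or band decomposition) method. Given a rotation system $\mathcal{R}$ on $G$: first, for each vertex $v$ take a closed oriented disk $D_v$ with $deg(v)$ arcs marked on its boundary in the cyclic order dictated by $\mathcal{R}$; second, for each edge $e=\{u,v\}$ take a rectangular band $B_e=[0,1]\times[0,1]$ and glue its short sides to the corresponding arcs on $D_u$ and $D_v$ so that the orientations of the disks and the band are compatible. The result $\Sigma$ is a compact orientable 2-manifold with boundary that contains $G$ as a deformation retract. To find the boundary components of $\Sigma$ I would run the standard \emph{face tracing} algorithm: starting from an oriented edge-end, alternate between traversing an edge to its opposite endpoint and passing to the next edge-end in the rotation there; this produces a finite disjoint union of combinatorial cycles which correspond bijectively to the boundary circles of $\Sigma$. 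Capping each such boundary circle with a 2-disk yields a closed oriented surface $S$ in which $G$ sits as a cellular embedding, whose induced rotation system is $\mathcal{R}$ by construction.

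For uniqueness up to orientation preserving homeomorphism, let $\phi_1:G\to S_1$ and $\phi_2:G\to S_2$ be two locally oriented cellular embeddings inducing the same rotation system $\mathcal{R}$. I would build a homeomorphism $h:S_1\to S_2$ in three stages: (i) send each $\phi_1(v)$ to $\phi_2(v)$ via oriented disk neighborhoods that match up the cyclic edge-end orderings, which is possible because the rotations agree; (ii) extend $h$ over each edge by a homeomorphism of band neighborhoods of $\phi_1(e)$ and $\phi_2(e)$, which exists because the ribbon structures on $G$ coming from the two embeddings are combinatorially identical; (iii) observe that face tracing gives the same combinatorial face walks for $\phi_1$ and $\phi_2$, so the regions of the two embeddings correspond in pairs, and since each region is an open disk (by cellularity) the already-defined homeomorphism on its boundary circle extends across the closed region by the classical fact that any orientation preserving homeomorphism of $S^{1}$ extends to $D^{2}$.

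The main obstacle is the cellular/topological content that underlies both constructions: one must verify that the face-tracing recipe really does detect the boundary components of the ribbon surface $\Sigma$, and that the disk-by-disk extensions in the uniqueness argument glue to a globally continuous, orientation preserving homeomorphism. Once those two facts are in hand (both are classical in topological graph theory), the rest of the argument is careful bookkeeping about cyclic orders and orientations.
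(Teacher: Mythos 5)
Your proposal is correct, but note that the paper itself does not prove this statement at all: it is quoted verbatim from Gross and Tucker's \emph{Topological Graph Theory} (the paper's reference \cite{Gross}), and the text introducing it says explicitly that it is ``taken from'' that source. What you have written is, in essence, a reconstruction of the standard proof that lives in that reference: the band-decomposition (ribbon graph) construction for existence, face tracing to identify boundary circles, capping with disks to obtain the closed surface, and a disk-by-disk extension argument for uniqueness. So your proof is not a different route from the paper so much as the route the paper delegates to the literature; the paper buys brevity by citation, while your argument makes the theorem self-contained and, usefully, makes explicit the face-tracing algorithm that the paper later relies on computationally (in the proof of Proposition \ref{prop:numerical-count} and Lemma \ref{lem:two-sided}). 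One small technical caution in your uniqueness step (iii): the closure of a region of a cellular embedding need not be an embedded closed disk, since the boundary walk may traverse an edge twice or revisit vertices; the correct formulation is that each region is the image of a closed polygon under a quotient map identifying boundary points according to the face walk, and the Alexander-trick extension of the boundary homeomorphism should be performed on that polygon and then descended to the quotient. With that adjustment your argument is the classical one and is complete.
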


For the calculations that follow, it will be useful to consider a closed neighborhood
around the image of a graph embedding.

\begin{defn}
\label{def:reduced_band_decomposition}
Let $\phi$ be an embedding of a graph $G$ into a surface $S$. We surround the image of
each vertex with a closed disk, referred to as a \emph{$0$-band}, and surround the image
of each edge with a thin band, referred to as a \emph{$1$-band}. The union of $0$-bands
and $1$-bands forms a closed neighborhood in $S$ that preserves the shape of the graph
G. We refer to the union of the $0$-bands and $1$-bands (omitting the rest of the
surface) as the \emph{reduced band decomposition} of $\phi$.
\end{defn}

For a more thorough discussion of rotation systems and reduced band decompositions see \cite{Gross}.

\begin{defn}
\label{def:boundary_walk}
Every region of an embedding $\phi$ of a graph $G$ into a surface $S$ is bounded by the image of a closed walk (ie a topological circle). This closed walk is unique (up to a cyclic permutation) and the ordered list of edges in the walk is referred to as the \emph{boundary walk} of the region.
\end{defn}

\begin{defn}
\label{def:edges-of-boundary-walk}
Let $\mathcal{B}$ be the reduced band decomposition corresponding
to a graph embedding $\phi$ of a graph $G$ into a surface $S$, and let $b$ be one of the boundary
components of $\mathcal{B}$. Then \emph{$\left\langle b\right\rangle $} denotes the
set of edges of $G$ that appear in the boundary walk of $b$.
\end{defn}

\begin{defn}
\label{def:two-sided-rotaion-system}
Let $\mathcal{R}$ be a rotation system for an embedding
of a graph $G$ with no isolated vertices. Let $b_{1}$, $b_{2}$,
..., $b_{n}$ be the boundary components of the reduced band decomposition
corresponding to $\mathcal{R}$. We refer to $\mathcal{R}$ as a \emph{two-sided rotation
system} if there exists a partition of the set $\{b_{1},b_{2},...,b_{n}\}$
into two sets of closed boundary walks, $B_{1}$ and $B_{2}$, such that for $\forall e\in E$,
$e\in\underset{b\in B_{1}}{\bigcup}\left\langle b\right\rangle$ and
$e\in\underset{b\in B_{2}}{\bigcup}\left\langle b\right\rangle$.
The cardinalities $|B_1|$ and $|B_2|$ will be denoted by $n_1$ and $n_2$.
\end{defn}

\noindent
Colloquially speaking, a two-sided rotation system $\mathcal{R}$ is a one for
which:
\begin{enumerate}
\item All vertices of the underlying graph $G$ have even degree greater than zero and
\item There exists an embedding $\phi$ of $G$ with the following properties:
\begin{enumerate}
\item The rotation system corresponding to $\phi$ is $\mathcal{R}$
\item The embedding $\phi$ has exactly two regions, one with boundary $B_1$ and the other
with boundary $B_2$.
\item The image of each edge can be found in the boundary of each region.
\end{enumerate}
\end{enumerate}

\section*{Identifying Minimal Separating Graphs}

Upon fixing a genus $g$ we can create the finite set $\mathbb{C}_{g}$
of candidate graphs. We know that the set $\mathbb{G}_{g}$ of minimal
separating graphs is contained in $\mathbb{C}_{g}$. Moreover, for
each graph $G\in\mathbb{C}_{g}$ we can generate the finite set $\mathbf{R}_{G}$
of all rotation systems on $G$.

Now suppose $R\in \mathbf{R}_{G}$. We want a way to test whether there exists
a minimal separating embedding $\phi$ from $G$ into $S$ such that:
\begin{enumerate}
\item The rotation system corresponding to $\phi$ is $R$ and
\item The surface $S$ has genus $g$.
\end{enumerate}
Let's begin with the following useful observation.

\begin{cor}
\label{cor:C1} $\mathbb{G}_{0}\subset\mathbb{G}_{1}\subset\mathbb{G}_{2}\subset...$
\end{cor}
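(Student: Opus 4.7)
The plan is to prove that every graph in $\mathbb{G}_g$ is also in $\mathbb{G}_{g+1}$ by explicitly constructing a minimal separating embedding into a surface of genus $g+1$ from one into a surface of genus $g$. Once this single step is established, the entire tower of inclusions follows by induction (or by iterating the construction).

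Fix $G \in \mathbb{G}_g$ and a minimal separating embedding $\phi: G \to S$ where $S$ has genus $g$. By Theorem \ref{thm:T1}, $\phi$ has exactly two regions $R_1, R_2$, and every edge of $\phi[G]$ lies in the boundary of both. The construction I have in mind is essentially the inverse of the surgery performed in Lemma \ref{lem:L6.5}: attach a handle to $S$ whose two feet both lie in the interior of one region. Concretely, pick two disjoint closed disks $D_1, D_2 \subset R_1$ (this is possible because $R_1$ is open and non-empty), remove their interiors, and glue a cylinder $C$ to the two resulting boundary circles with consistent orientation so that the resulting closed surface $S'$ is orientable. Since gluing a handle raises genus by exactly one, $S'$ has genus $g+1$.

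I then need to verify that the induced embedding $\phi': G \to S'$ (which agrees with $\phi$ pointwise, as $\phi[G]$ is disjoint from the surgery region in $R_1$) is a minimal separating embedding, using Theorem \ref{thm:T1}. Condition 1 (no isolated vertices) is inherited from $\phi$. For condition 2, the region $R_2$ is untouched, and the set $\bigl(R_1 \setminus (\mathrm{int}(D_1) \cup \mathrm{int}(D_2))\bigr) \cup \mathrm{int}(C)$ is still connected (removing two disks from a connected open surface leaves it connected, and the cylinder interior is glued along both boundary circles), so $\phi'$ again has exactly two regions. For condition 3, each edge of $\phi[G]$ still borders both regions because neither the edges nor their neighborhoods in $S$ were altered by the surgery. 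Thus $\phi'$ satisfies the three conditions, so $G \in \mathbb{G}_{g+1}$.

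Applying this handle-attachment inductively gives $\mathbb{G}_g \subset \mathbb{G}_{g+k}$ for all $k \geq 0$, which is exactly the claimed chain of inclusions. There is no real obstacle here; the only subtlety is ensuring orientability of $S'$, which is handled by choosing the gluing map of the cylinder to reverse the induced boundary orientations of $\partial D_1$ and $\partial D_2$ (both inherited from the fixed orientation of $S$). This is the same orientation convention implicit in the reverse surgery of Lemma \ref{lem:L6.5}.
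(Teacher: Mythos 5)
Your proof is correct and follows essentially the same route as the paper: the paper's argument is precisely to attach a handle to one of the two regions of the embedding, note the genus increases by one, and observe that the three conditions of Theorem \ref{thm:T1} remain satisfied. Your version simply spells out the surgery and the verification of the three conditions in more detail (including the orientability of the gluing), which the paper leaves implicit.
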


\begin{proof}
Let $G\in\mathbb{G}_{k}$ for some $k\ge0$. Let $S_{k}$
be a surface of genus $k$ and let $\phi_{k}$ be a minimal
separating embedding of $G$ into $S_{k}$. If we attach a handle to one of the two regions
of $\phi_{k}$, we increase the genus of the surface by one while
the three conditions of theorem \ref{thm:T1} remain satisfied. The result is
a minimal separating embedding $\phi_{k+1}$ of $G$ into a surface $S_{k+1}$ of genus $k+1$.
\end{proof}

In light of theorem \ref{thm:T1}, it follows that minimal separating graph embeddings
are exactly those with two-sided rotation systems. This is made rigorous with the following theorem.

\begin{thm}
\label{thm:T3}
There exists a minimal separating embedding $\phi$ of a graph $G$ with corresponding
rotation system $\mathcal{R}$ if and only if $\mathcal{R}$ is a two-sided rotation system.
\end{thm}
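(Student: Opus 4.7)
My plan is to prove the biconditional by combining Theorem \ref{thm:T1}'s three-condition characterization of minimal separating embeddings with the correspondence between rotation systems, reduced band decompositions, and surface embeddings (Theorem \ref{thm:unique-embedding}). The reduced band decomposition $\mathcal{B}$ is the bridge: its boundary circles and their boundary walks encode how the 1-bands sit between regions.

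For the forward direction, I would start with a minimal separating embedding $\phi$ of $G$ into $S$ with induced rotation system $\mathcal{R}$. By Theorem \ref{thm:T1}, there are exactly two regions $R_1, R_2$ and every edge lies in the boundary of both. Let $b_1, \ldots, b_n$ be the boundary components of $\mathcal{B}$, and define $B_i = \{b_j : b_j \subset \overline{R_i}\}$. Since $R_1 \cap R_2 = \emptyset$, each $b_j$ belongs to exactly one $B_i$, giving a partition. For each edge $e$, the two ``long sides'' of the 1-band of $e$ lie in the closures of the two different regions by condition 3 of Theorem \ref{thm:T1}, hence $e$ appears in some boundary walk in $B_1$ and some in $B_2$. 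This is the two-sidedness condition.

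For the reverse direction, I would construct an embedding from $\mathcal{R}$ explicitly. Form the reduced band decomposition $\mathcal{B}$; this is a compact orientable 2-manifold (orientability comes from $\mathcal{R}$ itself) with boundary components partitioned as $B_1 \sqcup B_2$. Cap off by attaching, for each $i$, a connected orientable surface $F_i$ with $|B_i|$ boundary circles matching $B_i$ --- a sphere with $|B_i|$ open disks removed is the simplest choice. The resulting space $S$ is a closed orientable 2-manifold, and the two regions of the resulting embedding $\phi$ are exactly the interiors of $F_1$ and $F_2$. The three hypotheses of Theorem \ref{thm:T1} then hold: no isolated vertices (built into the definition of two-sided rotation system), exactly two regions (by construction), and each edge in the boundary of both (by two-sidedness). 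Theorem \ref{thm:T1} then delivers that $\phi$ is a minimal separating embedding.

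The main obstacle I anticipate is showing $S$ is connected in the reverse direction, because $G$ (and hence $\mathcal{B}$) may be disconnected. The key observation to handle this is that if some component of $\mathcal{B}$ had all of its boundary circles lying in only one of $B_1$ or $B_2$, then the edges in that component would have both sides of their 1-bands appearing only in boundary walks from that single set, contradicting two-sidedness. Therefore every component of $\mathcal{B}$ is attached to both $F_1$ and $F_2$, and the two attached pieces glue everything into a single connected surface. Compatibility of orientations when gluing $F_1, F_2$ to $\mathcal{B}$ follows because rotation systems are exactly the data of a local orientation at each vertex, which extends to an orientation of $\mathcal{B}$, and spheres-with-holes admit matching orientations along their boundary circles.
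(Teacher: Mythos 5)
Your proposal is correct and follows essentially the same route as the paper: the forward direction partitions the boundary components of the band decomposition according to which of the two regions (guaranteed by Theorem \ref{thm:T1}) they lie in, and the reverse direction caps the reduced band decomposition with two punctured spheres and invokes Theorem \ref{thm:T1} again. Your explicit treatment of connectedness and orientation in the gluing step is a small refinement of details the paper leaves implicit, not a different argument.
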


\begin{proof}
Suppose there exists a minimal separating embedding $\phi$ with corresponding
rotation system $\mathcal{R}$. Then by condition 2 of theorem 1, there are
exactly two regions of $\phi$. Let $r_{1}$ and $r_{2}$ be the regions
of $\phi$. Note that every boundary component of $\mathcal{B}$ lies in exactly
one of the two regions of $\phi$. So form the desired partition of
$\{b_{1},b_{2},...,b_{n}\}$ by defining $B_{1}$ to be the set of
boundary components of $\mathcal{B}$ that lie in region $r_{1}$ and defining
$B_{2}$ analogously. Then condition 3 of theorem \ref{thm:T1} tell us that $\forall e\in E,\forall i\in\{1,2\},\left(e\in\underset{b\in B_{i}}{\bigcup}\left\langle b\right\rangle \right)$,
and the forward direction is proved.

Now suppose we have a two-sided rotation system so that the set
$\{b_{1},b_{2},...,b_{n}\}$ can be partitioned
into two boundaries, $B_{1}$ and $B_{2}$, such that for $\forall e\in E,\forall i\in\{1,2\},\left(e\in\underset{b\in B_{i}}{\bigcup}\left\langle b\right\rangle \right)$.
The number of connected components in $B_1$ ($B_2$) will be denoted by $n_1$ ($n_2$).
We now construct a minimal separating (cellular) embedding $\phi$ with corresponding
rotation system $\mathcal{R}$ using surgery.

By theorem \ref{thm:unique-embedding}, $\mathcal{R}$ defines a cellular embedding
$\phi : G\rightarrow M$ in a surface $M$. This embedding defines (by definition
\ref{def:cellular-embedding}) a reduced band decomposition $\mathcal{D}$ with boundary components
(definition \ref{def:two-sided-rotaion-system}) that are (disjoint) topological circles.
These circles come in two sets, those that can be retracted to $B_1$ and which we will call
$\tilde B_1$ and those than can be retracted to $B_2$ (called $\tilde B_2$).

Now consider two spheres, $A_{1}$ and $A_{2}$. Remove $n_1$
open discs from $A_{1}$ and name the result $\overline{A}_{1}$. Construct
$\overline{A}_{2}$ similarly. We identify the boundary components of $\tilde B_i$
to the boundary components of $\overline{A}_{i}$ through a bijection $\psi_i$:
\begin{equation}
\psi_{i}:\partial(\overline{A}_{i})\to \tilde B_{i}
\label{eq:boundary-bijection}
\end{equation}
The result is a surface $S$ that is the connected sum of the reduced band decomposition $\mathcal{D}$
with the $n_i$-punctured spheres $\overline A_i$:
\begin{equation}
S = {\cal D} \# \overline A_1 \# \overline A_2
\label{eq:connected-sum}
\end{equation}

By construction, there are exactly two regions of the
embedding (the regions corresponding to $\overline{A}_{1}$ and $\overline{A}_{2}$),
and every edge is on the boundary of both regions. So by theorem \ref{thm:T1},
the embedding is a minimal separating embedding.
\end{proof}

Given an embedding $\phi$ of a graph $G$, the boundary
components of the reduced band decomposition can be computed from the rotation
system by a simple combinatorial algorithm called face tracing (section 3.2.6 of \cite{Gross}).

\begin{lem}
\label{lem:two-sided}
Given the representation
\begin{equation*}
R=\quad
\begin{array}{ccc}
v_0 & :& a\;x_1\cdots x_p\;a\;x_{p+1}\cdots x_{k}\\
v_1 & : & \cdots\\
\vdots &&
\end{array}
\end{equation*}
of the rotation system $\mathcal{R}$ of a minimally separating embedding of a graph $G$ it follows that $p$ and $k$ are even.
\end{lem}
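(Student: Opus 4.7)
The plan is to handle the parities of $k$ and $p$ separately: $k$'s parity will come from a degree count via Lemma~\ref{lem:L5}, while $p$'s parity requires the two-sided structure of the rotation system combined with the orientability of $S$.

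First, the listed rotation at $v_0$ contains $k+2$ edge-ends---two occurrences of the loop $a$ and one each for $x_1,\ldots,x_k$---so $\deg(v_0)=k+2$. Since $G$ is the underlying graph of a minimal separating embedding, Lemma~\ref{lem:L5} forces $\deg(v_0)$ to be even, and hence $k$ is even.

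For the parity of $p$, I would label the $k+2$ corners around $v_0$ as $c_1,c_2,\ldots,c_{k+2}$, where $c_i$ sits between the $i$-th and $(i{+}1)$-st darts of the rotation (indices mod $k+2$). Theorem~\ref{thm:T3} tells us the rotation system is two-sided, so each corner lies in exactly one of the two regions. Condition~3 of Theorem~\ref{thm:T1}, applied at every dart individually, gives that the two corners flanking any single dart lie in opposite regions, so the regions of $c_1,c_2,\ldots,c_{k+2}$ strictly alternate around $v_0$. The key claim is then that $c_1$ (between the first $a$ and $x_1$) and $c_{p+1}$ (between $x_p$ and the second $a$) lie in the \emph{same} region. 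Orientability of $S$ is essential here: the $1$-band of the loop $a$ is an annulus rather than a M\"{o}bius strip, and its two long sides connect the corner clockwise of one $a$-end (on the $0$-band of $v_0$) to the corner counterclockwise of the other $a$-end. Equivalently, the standard face-tracing rule for rotation systems (see \cite{Gross}) gives that the face permutation $\sigma_{v_0}^{-1}\circ\alpha$ sends the dart $a_1$ to $\sigma_{v_0}^{-1}(a_2)=x_p^{v_0}$, placing $c_1$ and $c_{p+1}$ as consecutive corners in a single boundary walk and hence in the same region.

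Combining these observations, $c_1$ and $c_{p+1}$ are separated around $v_0$ by the $p$ intervening darts $x_1^{v_0},\ldots,x_p^{v_0}$, so by the alternation they share a region if and only if $p$ is even, forcing $p$ to be even. The main technical point is the orientation/sign check in the face-tracing rule: a sign error would match $c_1$ with $c_{p+2}$ instead of $c_{p+1}$ and reverse the parity conclusion, so this must be reconciled carefully with the clockwise convention of Definition~\ref{def:rotation_at_v}. Once that convention is fixed and checked against the band decomposition, both parity statements follow cleanly.
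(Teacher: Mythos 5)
Your proof is correct and takes essentially the same route as the paper's: parity of $k$ from the degree count via Lemma \ref{lem:L5}, and parity of $p$ from the strict alternation of regions among the corners at $v_0$ combined with the fact that, by orientability, the $1$-band of the loop $a$ is an annulus whose two long sides each join a corner flanking one end of $a$ to a corner flanking the other end. The paper phrases this as travelling along an oriented $a$ with a consistent right-hand side and matches the corners $(x_k,a)$ and $(a,x_{p+1})$, while you match the complementary pair $(a,x_1)$ and $(x_p,a)$ (the other side of the same annulus) and add a face-tracing formalization, but the argument is the same.
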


\begin{proof} By Lemma \ref{lem:L5}, $p$ is even iff $k$ is even. We show that $p$ is even.

In the embedding edge $a$ must separate a boundary component $b_1\in B_1$
from a boundary component $b_2\in B_2$.
Orient edge $a$ and travel with the orientation along $a$. The boundary component
at your right side (with respect to the orientation) must always point towards
the same component. It follows that the \emph{corner} $(x_k,a)$ corresponds
to the same boundary component as the corner $(a,x_{p+1})$. The same is true for the
other two corners involving edge ends of $a$. Since the corners belonging to boundary
component in $B_1$ must alternate with those belonging to $B_2$, we conclude that $p$
is even.
\end{proof}

\begin{thm}
\label{thm:T4}
A graph $G$ is in $\mathbb{G}_{g}$ for some $g$ if and only if there exists
a two-sided rotation system $\mathcal{R}$ on $G$ such that $g\ge\frac{|E|-|V|+n_1+n_2}{2}-1$
(where $n_1$ and $n_2$ are the numbers of boundary components corresponding to regions
1 and 2 of the reduced band decomposition corresponding to $\mathcal{R}$).
\end{thm}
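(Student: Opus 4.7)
The idea is to combine Theorem \ref{thm:T3} with a direct Euler-characteristic computation. Theorem \ref{thm:T3} already identifies minimal separating embeddings with two-sided rotation systems, so the task reduces to matching the genus of the ambient surface to the data $|V|,|E|,n_1,n_2$ attached to such a rotation system.

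For the ``if'' direction, suppose $\mathcal{R}$ is a two-sided rotation system on $G$, and set $g_0 := \tfrac{|E|-|V|+n_1+n_2}{2}-1$. I would rerun the surgery from the proof of Theorem \ref{thm:T3}: glue the reduced band decomposition $\mathcal{D}$ to two punctured spheres $\overline{A}_1$ and $\overline{A}_2$ along the $n_1+n_2$ boundary circles. Since $\mathcal{D}$ deformation retracts onto $G$, one has $\chi(\mathcal{D})=|V|-|E|$, while $\chi(\overline{A}_i)=2-n_i$. Additivity of $\chi$ across intersections that are disjoint unions of circles (each with $\chi=0$) gives
\begin{equation*}
\chi(S) \;=\; (|V|-|E|) + (2-n_1) + (2-n_2) \;=\; 2-2g_0,
\end{equation*}
so the construction produces a minimal separating embedding of $G$ into a surface of genus exactly $g_0$. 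Corollary \ref{cor:C1} then upgrades this to a minimal separating embedding into any surface of genus $g\ge g_0$.

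For the ``only if'' direction, let $\phi:G\to S$ be a minimal separating embedding with $S$ of genus $g$, and let $\mathcal{R}$ be the induced rotation system. By Theorem \ref{thm:T3}, $\mathcal{R}$ is two-sided with parameters $n_1,n_2$. The main care point, and the only real obstacle, is that $\phi$ need not be cellular, so the two regions $r_1,r_2$ need not be disks: each closure $\overline{r}_i$ is a compact orientable surface with $n_i$ boundary circles and some unknown genus $g^{(i)}\ge 0$, whence $\chi(\overline{r}_i)=2-2g^{(i)}-n_i$. Decomposing $S$ as $\mathcal{D}\cup\overline{r}_1\cup\overline{r}_2$ glued along circles and summing Euler characteristics as before yields, after solving for $g$,
\begin{equation*}
g \;=\; \tfrac{|E|-|V|+n_1+n_2}{2}-1 + g^{(1)} + g^{(2)} \;\ge\; \tfrac{|E|-|V|+n_1+n_2}{2}-1,
\end{equation*}
which is the desired inequality; the slack is precisely the hidden genus of the two complementary regions, and the argument shows this slack can only loosen, never tighten, the bound.
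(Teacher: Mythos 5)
Your proposal is correct and follows essentially the same route as the paper: Theorem \ref{thm:T3} plus additivity of the Euler characteristic over the decomposition of $S$ into the reduced band decomposition and the two complementary regions, yielding $g=\frac{|E|-|V|+n_1+n_2}{2}-1+g_1+g_2$. The only difference is presentational: you cleanly separate the two directions and invoke Corollary \ref{cor:C1} to realize every genus $g\ge g_0$, whereas the paper compresses both directions into the single genus formula and the remark that $g$ is minimized when $g_1=g_2=0$.
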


\begin{proof} $G$ is in $\mathbb{G}_{g}$ if and only if there exists
a two-sided rotation system $\mathcal{R}$ by theorem \ref{thm:T3}.
Equivalently the embedding in the surface $S$ with genus $g$ admits a reduced
band decomposition ${\cal D}$ with $n_1$ bordering on the component $A_1$ of
$S\backslash {\cal D}$ and $n_2$ boundary components bordering of the component $A_2$.
Each of these components $A_i$ has itself genus $g_i$.

The genus of $S$ is the genus of the connected sum in Equation \ref{eq:connected-sum}
in the proof of theorem \ref{thm:T3}. Thus
$\chi_{S}=\chi(\overline A_1)+\chi(\overline A_2)+\chi(D)$. Since the characteristic
of an $n$-punctured surface of genus $g_i$ equals $2-2g_i-n_i$, we have
\begin{equation}
\chi_S = 2-2g_1-n_1+2-2g_2-n_2+|V|-|E|
\label{eq:extra-characteristic}
\end{equation}
Solving for the genus $g_S$ of $S$ gives:
\begin{equation}
g_S = \frac{|E|-|V|+n_1+n_2}{2}-1+g_1+g_2
\label{eq:extra-genus}
\end{equation}
Clearly $g$ is minimized when $g_1=g_2=0$.
\end{proof}

\begin{defn} The minimally separating embedding of $G$ is \emph{irreducible} if equality
holds in Theorem \ref{thm:T4} holds (ie if $g_1=g_2=0$ in Equation \ref{eq:extra-genus}.
\label{def:irreducible-embedding}
\end{defn}

\begin{lem} If $G\in\mathbb{G}_{g}$ consists of two disjoint nonempty subgraphs $\underline G$ and
$\overline G$ we have each of the subgraphs as elements of $\bigcup_{g'<g}\,\mathbb{G}_{g'}$.
\label{lem:disjoint graphs}
\end{lem}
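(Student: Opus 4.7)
The approach is to restrict the rotation system of the minimal separating embedding $\phi:G\to S$ realizing $G\in\mathbb{G}_g$ to each summand and then invoke Theorems~\ref{thm:T3} and~\ref{thm:T4} to produce minimal separating embeddings of $\underline G$ and $\overline G$ into surfaces of strictly smaller genus. Let $\mathcal R$ be the rotation system of $\phi$, with two-sided partition $\{B_1,B_2\}$ of the boundary components of its reduced band decomposition $\mathcal B$. Because $\underline G$ and $\overline G$ are disjoint subgraphs whose union is $G$, their vertex sets are disjoint and no edge of $G$ joins the two subgraphs; hence the cyclic order of edges at any vertex $v$ of $G$ uses only edges from the one subgraph containing $v$, and $\mathcal R$ restricts without ambiguity to rotation systems $\underline{\mathcal R}$ on $\underline G$ and $\overline{\mathcal R}$ on $\overline G$.

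The central observation is that each of these restrictions is itself two-sided. Face tracing is a local algorithm: each step is determined by the rotation at a single vertex together with the edges incident there. Because $\underline G$ and $\overline G$ share no vertices, a boundary walk that currently traverses an edge of $\underline G$ cannot reach an edge of $\overline G$. Thus every boundary component of $\mathcal B$ lies entirely within one subgraph, and the two parts split as $B_i=\underline B_i\sqcup\overline B_i$ for $i=1,2$. Every edge of $\underline G$ appears in some walk of $B_1$ and in some walk of $B_2$ for $\mathcal R$, and those walks never leave $\underline G$, so $(\underline B_1,\underline B_2)$ exhibits $\underline{\mathcal R}$ as two-sided; likewise $\overline{\mathcal R}$ is two-sided.

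Theorem~\ref{thm:T3} then produces minimal separating embeddings of $\underline G$ and $\overline G$ into surfaces, and Theorem~\ref{thm:T4} gives the least possible genera
\[
g_{\underline G}=\tfrac{1}{2}\bigl(|E(\underline G)|-|V(\underline G)|+|\underline B_1|+|\underline B_2|\bigr)-1,\qquad g_{\overline G}=\tfrac{1}{2}\bigl(|E(\overline G)|-|V(\overline G)|+|\overline B_1|+|\overline B_2|\bigr)-1,
\]
both non-negative since each is the genus of an actual surface. Summing these and using additivity of $|V|$, $|E|$, $n_1$, $n_2$ across the disjoint decomposition yields $g_{\underline G}+g_{\overline G}+1=\tfrac{1}{2}(|E(G)|-|V(G)|+n_1+n_2)-1\le g$, where the inequality is Theorem~\ref{thm:T4} applied to $\phi$ itself. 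Since $g_{\underline G},g_{\overline G}\ge 0$, this forces both $g_{\underline G}<g$ and $g_{\overline G}<g$, so each subgraph lies in $\bigcup_{g'<g}\mathbb{G}_{g'}$. The main point that will need care is the boundary-walk splitting described in the second paragraph; once it is in hand, the Euler-characteristic bookkeeping of Theorem~\ref{thm:T4} splits additively and the conclusion is immediate.
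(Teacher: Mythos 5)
Your proof is correct and takes essentially the same route as the paper's: both realize each component as a minimal separating graph of the lower-genus surface produced by the capping surgery of Theorem \ref{thm:T3} (you via the restricted rotation systems, the paper via a closed neighborhood of $\phi[\underline G]$), and both rest on the fact that boundary walks cannot pass between vertex-disjoint subgraphs, so that $|V|$, $|E|$, $n_1$, $n_2$ split additively in Equation \ref{eq:extra-genus}. The only real difference is the final bound: the paper controls the contribution of $\overline G$ by citing Lemmas \ref{lem:L4} and \ref{lem:L2} (giving $|\overline E|-|\overline V|\ge 0$ and $\overline n_1,\overline n_2\ge 1$), whereas you obtain the equivalent estimate more directly from the non-negativity of $g_{\overline G}$ as the genus of an actual closed orientable surface.
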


\begin{proof} It is sufficient to prove the lemma for $\underline G$.
Suppose $G$ embeds as a minimal separating set in the surface $S$.
Denote the quantities corresponding to the disjoint subgraphs by underbars and overbars, ie
$|\underline E|$ is the number of edges in $\underline G$ and so forth.
Let $\underline C$ be a closed neighborhood of $\phi[\underline G]$. Then $\phi$ restricted
to $\underline C$ satisfies theorem \ref{thm:T1}. Remove $\underline C$ from $S$.
Let $\underline A_1$ and $\underline A_2$ be the $\underline n_1$ and $\underline n_2$
punctured spheres. We denote by $\underline S$ the connected sum
\begin{equation*}
\underline S = \underline C \# \underline A_1 \# \underline A_2
\end{equation*}
The genus of this surface according to equation \ref{eq:extra-genus} equals
\begin{equation*}
g_{\underline S} = \frac{|\underline E|-|\underline V|+\underline n_1+\underline n_2}{2}-1
\end{equation*}
By the same reasoning the genus of the original surface $S$ equals
\begin{eqnarray*}
g_S &=& \frac{|E|-|V|+n_1+n_2}{2}-1+g_1+g_2 \\
 & = &\frac{|\overline E|-|\overline V|+\overline n_1+\overline n_2+
|\underline E|-|\underline V|+\underline n_1+\underline n_2}{2}-1+g_1+g_2
\end{eqnarray*}
Lemma \ref{lem:L4} implies that $|\overline E|-|\overline V|\geq 0$ and lemma \ref{lem:L2} that
$\overline n_1$ and $\overline n_2$ are at least 1. Thus $g_S\geq g_{\underline S} +1$.
\end{proof}

\begin{lem} If $\underline G$ and $\overline G$ have least separated genus
$\underline g$ and $\overline g$, respectively, then their disjoint union has least separated
genus $\underline g+\overline g + 1$
\label{lem:disjoint graphs2}
\end{lem}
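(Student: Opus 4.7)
The plan is to establish the two inequalities $\gamma_-(\underline G \sqcup \overline G) \le \underline g + \overline g + 1$ and $\gamma_-(\underline G \sqcup \overline G) \ge \underline g + \overline g + 1$ separately. The upper bound comes from an explicit surgery construction, and the lower bound is a direct rerun of the Euler-characteristic bookkeeping from the proof of Lemma~\ref{lem:disjoint graphs}, only now tracking both subgraphs symmetrically rather than discarding one.

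For the upper bound, I would fix optimal minimal separating embeddings $\underline\phi:\underline G\to\underline S$ and $\overline\phi:\overline G\to\overline S$ into surfaces of genera $\underline g$ and $\overline g$, and label the two regions of each as $\underline r_1,\underline r_2$ and $\overline r_1,\overline r_2$. Starting from the disjoint union $\underline S\sqcup\overline S$, I would first attach a thin tube joining small disks in $\underline r_1$ and $\overline r_1$: this connects the two surfaces into one without adding a handle and merges those two regions. I would then attach a second tube joining small disks in $\underline r_2$ and $\overline r_2$; because the surface is already connected, this second tube contributes one handle, producing a connected surface of genus $\underline g+\overline g+1$ with exactly two regions. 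Placing both tubes in disks disjoint from the graph images leaves the embedding of $\underline G\sqcup\overline G$ intact, and each edge is still on the boundary of both regions (a property inherited from the two original embeddings), so Theorem~\ref{thm:T1} certifies the construction as minimally separating.

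For the lower bound, suppose $\underline G\sqcup\overline G\in\mathbb{G}_g$ and fix a two-sided rotation system $\mathcal{R}$ realizing it, with region counts $n_1,n_2$. Because the underlying graph has no edges across the two components, $\mathcal{R}$ restricts to rotation systems $\underline{\mathcal{R}}$ and $\overline{\mathcal{R}}$; each boundary walk of the reduced band decomposition lies entirely over a single component, and so $\underline{\mathcal{R}},\overline{\mathcal{R}}$ are themselves two-sided with region counts $\underline n_i,\overline n_i$ summing to $n_i$. Applying Theorem~\ref{thm:T4} and splitting,
\[
g \;\ge\; \frac{|E|-|V|+n_1+n_2}{2}-1
\;=\; \left(\frac{|\underline E|-|\underline V|+\underline n_1+\underline n_2}{2}-1\right) + \left(\frac{|\overline E|-|\overline V|+\overline n_1+\overline n_2}{2}-1\right) + 1,
\]
and each parenthesized term is the genus of the (irreducible) minimal separating embedding of the corresponding subgraph induced by its restricted rotation system, hence at least $\underline g$ and $\overline g$ respectively. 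Thus $g\ge\underline g+\overline g+1$.

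The only nonroutine step is the upper bound construction: I need to verify that the two tubes can be attached in disks disjoint from the graph, disjoint from each other, and in such a way that the only effect is to merge the pair $(\underline r_i,\overline r_i)$ for each $i$ into a single region. These choices are easy to make since each region is a nonempty open subset of a surface, so I expect no serious obstacle and the lemma follows by combining the two bounds.
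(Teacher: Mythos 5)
Your proof is correct, but it is organized differently from the paper's. The paper handles both directions at once inside the rotation-system formalism: by Theorem~\ref{thm:T4} (with $g_1=g_2=0$), the least separated genus of a graph is the minimum of $\frac{|E|-|V|+n_1+n_2}{2}-1$ over its two-sided rotation systems; since a rotation system of a disjoint union is exactly a union of rotation systems of the components and boundary walks never cross components, that minimum splits as the sum of the componentwise minima, and the arithmetic gives $\underline g+\overline g+1$. Your lower bound is precisely this splitting argument (restriction of a two-sided system to each component), so there you coincide with the paper; where you diverge is the upper bound, which you obtain by surgery --- gluing the two optimal surfaces with one tube joining $\underline r_1$ to $\overline r_1$ (a connected sum, genus $\underline g+\overline g$) and a second tube joining $\underline r_2$ to $\overline r_2$ (a handle, genus $\underline g+\overline g+1$), then certifying minimality via Theorem~\ref{thm:T1} --- whereas the paper's achievability is implicit in taking the union of the two optimal rotation systems. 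Your version buys something real: the paper's one-line assertion that ``the minimum number of boundary walks equals $\underline n_1+\underline n_2+\overline n_1+\overline n_2$'' silently requires both that the union of two two-sided systems is two-sided and that every two-sided system of the union restricts to two-sided systems on the components; your explicit two-inequality structure verifies each of these, at the cost of a longer argument and one extra point of care (the tubes must be attached in an orientation-compatible way, in disks disjoint from the graph images, which as you say is easy to arrange).
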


\begin{proof} Let $\underline {\cal R}$ be the rotation system for $\underline G$
that minimizes the number of boundary walks $\underline n_1+\underline n_2$.
The least separated genus $\underline g$ satisfies:
\begin{equation*}
\underline g = \frac{|\underline E|-|\underline V|+\underline n_1+\underline n_2}{2}-1
\end{equation*}
The same holds for $\overline G$. The rotation system for the disjoint union of
$\underline G$ and $\overline G$, is the union of their two rotation systems
${\cal R} =\underline {\cal R} \cup \overline {\cal R}$. The boundary walks
of ${\cal R}$ are exactly the union those of in $\underline {\cal R}$ and those
in $\overline {\cal R}$. Therefore the minimum number of boundary walks in  ${\cal R}$
equals $\underline n_1+\underline n_2+\overline n_1+\overline n_2$. And thus the least separated genus $g$ of $G$ satisfies:
\begin{eqnarray*}
g &=& \frac{|\overline E|-|\overline V|+\overline n_1+\overline n_2+
|\underline E|-|\underline V|+\underline n_1+\underline n_2}{2}-1
\end{eqnarray*}
which implies the result.
\end{proof}

\section*{The Results}

Given a genus $g$, we compute three sets associated with it.
\begin{align*}
\mathbb{I}_g &= \{\text{Connected graphs whose least separated genus equals}\hspace{0.5em} g\}\\
\mathbb{L}_g &= \{\text{Graphs whose least separated genus equals}\hspace{0.5em} g \}\\
\mathbb{G}_g &= \{\text{Graphs that have a minimal separating embedding in genus}\hspace{0.5em} g\}
\end{align*}

\begin{prop} We have: $|\mathbb{I}_2|=17$ and $|\mathbb{I}_3|=161$.
\label{prop:numerical-count}
\end{prop}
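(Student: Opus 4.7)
The plan is to verify this proposition by exhaustive computer enumeration, which is guaranteed to terminate by Theorem \ref{thm:T2} together with the explicit bounds of Definition \ref{def:c_g}. First, I would generate all connected candidate graphs in $\mathbb{C}_g$ up to isomorphism. For $g=2$, the conditions of Definition \ref{def:c_g} give $|V|\leq 4$ and $|E|\leq 8$; for $g=3$ they give $|V|\leq 6$ and $|E|\leq 12$. One can iterate over admissible degree sequences (every nonzero degree even, and every degree-$2$ vertex occurring only as an isolated loop, per Lemmas \ref{lem:L4} and \ref{lem:L5}), generate all multigraph realizations, discard disconnected graphs for the purpose of computing $\mathbb{I}_g$, and keep one representative per isomorphism class using canonical labeling (e.g.\ \texttt{nauty}).

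For each connected candidate $G$, I would then enumerate its rotation systems. Since the rotation at a vertex $v$ is a cyclic ordering of $\deg(v)$ edge-ends, there are $(\deg(v)-1)!$ choices at $v$, with the total number of rotation systems being the product over $v\in V(G)$, further reducible modulo the action of $\mathrm{Aut}(G)$. For each rotation system $\mathcal{R}$, the face-tracing algorithm of Gross \S 3.2.6 produces the list of boundary walks $b_1,\dots,b_n$ of the reduced band decomposition together with the edge sets $\langle b_i\rangle$.

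Next I would test two-sidedness per Definition \ref{def:two-sided-rotaion-system} by enumerating the $2^n$ bipartitions $\{b_1,\dots,b_n\}=B_1\sqcup B_2$ and checking that $\bigcup_{b\in B_1}\langle b\rangle=\bigcup_{b\in B_2}\langle b\rangle=E(G)$. Since $n$ is bounded above by Euler's formula, this brute-force search is cheap. Over all rotation systems of $G$ and all valid bipartitions, I would maximize $n_1+n_2$ to read off the least separated genus from Theorem \ref{thm:T4}:
\begin{equation*}
\gamma_-(G)=\frac{|E(G)|-|V(G)|+n_1+n_2}{2}-1.
\end{equation*}
If no two-sided rotation system exists, then $\gamma_-(G)=\infty$ and $G$ is discarded. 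Finally, $|\mathbb{I}_g|$ equals the number of connected $G\in\mathbb{C}_g$ with $\gamma_-(G)=g$, and the resulting counts are claimed to be $17$ and $161$.

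The main obstacle is computational scale at $g=3$: the number of connected multigraphs on up to six vertices with up to twelve edges, each vertex potentially carrying $(\deg(v)-1)!$ rotations up to $11!$, is enormous in the worst case. Effective symmetry reduction via canonical labeling and early termination (stopping as soon as a rotation system realizes the genus target) are essential for feasibility. A secondary concern is correctness of the pipeline; I would validate it by first reproducing the known counts $|\mathbb{I}_0|=1$ and $|\mathbb{I}_1|=5$ from \cite{Bernhard/Veerman} and \cite{Veerman/Bernhard} before reporting the new numbers for $g=2$ and $g=3$.
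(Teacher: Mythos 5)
Your overall pipeline is the same as the paper's: generate the connected candidate graphs of Definition \ref{def:c_g}, enumerate rotation systems, face-trace to get the boundary walks, test two-sidedness, and use the genus formula of Theorem \ref{thm:T4} to decide whether the graph first separates in genus $g$. (The paper additionally prunes rotation systems using Lemma \ref{lem:two-sided}, and in genus 3 stops at the first successful rotation system per graph, but these are efficiency refinements, not conceptual differences.) However, your procedure contains a genuine error: you propose to \emph{maximize} $n_1+n_2$ over two-sided rotation systems and read off $\gamma_-(G)$ from the resulting value. The genus $\frac{|E|-|V|+n_1+n_2}{2}-1$ is an \emph{increasing} function of $n_1+n_2$, and the least separated genus is the \emph{minimum} of this quantity over all two-sided rotation systems (this is exactly how $\gamma_-$ is expressed in the proof of Theorem \ref{thm:relations}); what your maximization computes is instead the largest irreducibly separated genus $\gamma_+(G)$ of Definition \ref{defn:maxgenus}. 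Concretely, the bouquet of three circles (graph 3 of Table 1, $|V|=1$, $|E|=3$) admits two-sided rotation systems with $n_1+n_2=4$ (genus $2$) and with $n_1+n_2=2$ (genus $1$); its least separated genus is $1$, so it belongs to $\mathbb{I}_1$, but your rule would place it in $\mathbb{I}_2$. Errors of this kind would corrupt both counts, so the proposal as written does not establish $|\mathbb{I}_2|=17$ and $|\mathbb{I}_3|=161$.

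There is also a flaw in your validation step: you plan to calibrate the program against $|\mathbb{I}_0|=1$ and $|\mathbb{I}_1|=5$, but $|\mathbb{I}_1|=3$. The number $5$ is $|\mathbb{G}_1|$, the count of \emph{all} minimal separating graphs of the torus, which includes the circle (already in $\mathbb{I}_0$) and the disconnected graph consisting of two disjoint loops; $\mathbb{I}_1$ contains only the three \emph{connected} graphs whose least separated genus is exactly $1$. With the wrong benchmark you would either reject a correct implementation or, worse, tune an incorrect one (for instance, one with the max/min error above) until it reproduces $5$. Your written definition of membership, ``connected $G$ with $\gamma_-(G)=g$,'' is correct, so the fix is localized: replace the maximization by a minimization of $n_1+n_2$ over two-sided rotation systems, and validate against $|\mathbb{I}_1|=3$ (equivalently, against the sets $\mathbb{I}_0=\{$circle$\}$ and $\mathbb{I}_1=\{$graphs 2, 3, 8$\}$ shown in Figure \ref{fig:meta1}).
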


\begin{proof}
This count was computer generated. The counting program follows the following steps.
\begin{enumerate}
\item
Generate the finite set of connected graphs in $\mathbb{C}_{g}$ of candidate graphs for
genus $g$ (see definition \ref{def:c_g}, it is important to note that condition 5 is
computationally expensive so a weaker version is used which looks for non-intersecting
loops instead of non-intersecting cycles.).
\item
For each connected graph $G$ in $\mathbb{C}_{g}$ generate the finite set of rotation systems
of $G$ satisfying Lemma \ref{lem:two-sided} up to cyclic permutation.
\item
For each rotation system found in step 2, check whether it is two-sided and
satisfies $g = \frac{|E|-|V|+n_1+n_2}{2}-1$ (see theorem \ref{thm:T4}).
\end{enumerate}
The graph $G$ of step 2 is in $\mathbb{I}_{g}$ if and only if any rotation system
tests affirmatively in step 3 (and did not appear in a surface with lower genus).
We keep a running list of the graphs $G$ that pass the test,
as well as the successful rotation systems. In genus 2, we determine all rotation systems
that pass the test (see the table at the end of the article).
In genus 3, we stop if we have 1 successful rotation system for a graph.
\end{proof}

Using the computer-generated count and some combinatorics yields the main result.

\begin{thm}
The cardinalities of $\mathbb{I}_g$, $\mathbb{L}_g$, and $\mathbb{G}_g$ for $g \leq 3$ are
as follows.
\\
\center
\begin{tabular}{|c|l|c|c|}
\hline
Genus & $\mathbb{I}_g$ & $\mathbb{L}_g$ & $\mathbb{G}_g$\\
\hline
0 & 1 & 1 & 1\\
\hline
1 & 3 & 4 & 5\\
\hline
2 & 17 & 21 & 26\\
\hline
3 & 161 & 191 & 217\\
\hline
\end{tabular}
\end{thm}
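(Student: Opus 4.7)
The plan is to deduce the theorem from two structural results already proved---Corollary \ref{cor:C1} (the nesting $\mathbb{G}_0\subset\mathbb{G}_1\subset\cdots$) and Lemma \ref{lem:disjoint graphs2} (the additive behavior of $\gamma_-$ under disjoint union)---together with the enumerations in Proposition \ref{prop:numerical-count}, so that the theorem collapses to a finite multiset count.

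First I would pin down the $\mathbb{I}_g$ column. For $g=0$ the sphere admits only a simple closed curve, whose underlying homeomorphically irreducible graph is the single-vertex loop, so $|\mathbb{I}_0|=1$. For $g=1$, the five torus graphs of \cite{Bernhard/Veerman,Veerman/Bernhard} split into the sphere graph, one disconnected graph (two disjoint parallel non-contractible loops, which Lemma \ref{lem:disjoint graphs2} places in $\mathbb{L}_1\setminus\mathbb{I}_1$), and three genuinely torus-minimal connected graphs, yielding $|\mathbb{I}_1|=3$. The values $|\mathbb{I}_2|=17$ and $|\mathbb{I}_3|=161$ are supplied directly by Proposition \ref{prop:numerical-count}.

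Next I would pass from $\mathbb{I}$ to $\mathbb{L}$. Iterating Lemma \ref{lem:disjoint graphs2} shows that a graph $G=C_1\sqcup\cdots\sqcup C_k$ with $C_i\in\mathbb{I}_{g_i}$ satisfies $\gamma_-(G)=g_1+\cdots+g_k+(k-1)$. Thus the isomorphism classes in $\mathbb{L}_g$ biject with multisets of connected components drawn from $\bigsqcup_{h\ge 0}\mathbb{I}_h$ whose signatures $(g_1,\ldots,g_k)$ satisfy $\sum g_i+k-1=g$. The number of classes with a fixed signature is the product $\prod_h \binom{|\mathbb{I}_h|+m_h-1}{m_h}$ of multiset coefficients taken over the distinct values $h$ appearing with multiplicity $m_h$. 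For $g\le 3$ the admissible signatures are few and explicit, and summing gives $|\mathbb{L}_g|$ in each row. Finally, Corollary \ref{cor:C1} and the definition of $\gamma_-$ together force $\mathbb{G}_g=\bigsqcup_{k=0}^{g}\mathbb{L}_k$, so $|\mathbb{G}_g|=\sum_{k=0}^{g}|\mathbb{L}_k|$, and the table is read off by substitution.

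The only substantive obstacle is already absorbed into Proposition \ref{prop:numerical-count}, whose proof rests on a computer-assisted enumeration of two-sided rotation systems. Once its output is accepted, the remainder of the theorem is mechanical bookkeeping with the multiset formula above.
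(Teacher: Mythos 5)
Your overall route is the paper's own: $|\mathbb{I}_2|$ and $|\mathbb{I}_3|$ come from Proposition \ref{prop:numerical-count}, the genus-0 and genus-1 rows from \cite{Bernhard/Veerman} and \cite{Veerman/Bernhard}, the passage from $\mathbb{I}$ to $\mathbb{L}$ from Lemmas \ref{lem:disjoint graphs} and \ref{lem:disjoint graphs2} applied to the components of a disconnected graph, and $|\mathbb{G}_g|=\sum_{k=0}^{g}|\mathbb{L}_k|$ from the monotonicity of Corollary \ref{cor:C1}. But your bookkeeping step is not the paper's, and the difference changes the answer. The paper computes
\[
|\mathbb{L}_3| \;=\; |\mathbb{I}_0||\mathbb{I}_0||\mathbb{I}_0||\mathbb{I}_0| + |\mathbb{I}_0||\mathbb{I}_0||\mathbb{I}_1| + |\mathbb{I}_0||\mathbb{I}_2| + |\mathbb{I}_1||\mathbb{I}_1| + |\mathbb{I}_3| \;=\; 1+3+17+9+161 \;=\; 191,
\]
that is, it counts the signature $(1,1)$ by the plain product $|\mathbb{I}_1||\mathbb{I}_1|=9$. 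Your multiset coefficient instead gives $\binom{|\mathbb{I}_1|+2-1}{2}=\binom{4}{2}=6$ for that signature, hence $|\mathbb{L}_3|=161+17+6+3+1=188$ and $|\mathbb{G}_3|=1+4+21+188=214$. So your final claim --- that ``the table is read off by substitution'' --- is false for the last row: your formula does not produce $191$ and $217$. You never actually carried out the $g=3$ substitution, which is why this went unnoticed; as written, the proposal does not establish the theorem it was supposed to prove.

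The awkward point is that your counting principle is the defensible one. $\mathbb{L}_3$ is a set of graphs taken up to isomorphism (Definition \ref{def:graph}), disjoint union is commutative, and the three graphs of $\mathbb{I}_1$ (the figure-eight, the bouquet of three circles, and two vertices joined by four parallel edges) give exactly six pairwise non-isomorphic two-component unions, not nine: the ordered product double-counts the three unions whose two components are non-isomorphic. So either one must explain why ordered pairs are the right objects to count (I see no justification consistent with the paper's definitions), or the published entries $191$ and $217$ are overcounts and should read $188$ and $214$. Either way there is a genuine gap in your proposal: it silently substitutes a different combinatorial formula for the paper's, asserts without computation that it yields the same numbers, and in fact it does not. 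To make the write-up honest you must either adopt and justify the paper's product count, or carry your multiset count to its conclusion and state explicitly that it contradicts the last row of the table.
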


\noindent
{\bf Remark:} At the date of this writing, none of the integer sequences $\{1,3,17,161\}$,
$\{1,4,21,191\}$, or $\{1,5,26,217\}$ are listed in the Online Encyclopedia of Integer Sequences
\cite{OEIS}.

\begin{proof}
The numbers for genera 0 and 1 are discussed in \cite{Bernhard/Veerman} and \cite{Veerman/Bernhard}. The cardinalities
of $|\mathbb{I}_2|$ and $|\mathbb{I}_3|$ are given in Proposition \ref{prop:numerical-count}.
We show that the counts for $\mathbb{L}_g$  for $\mathbb{G}_g$ can be expressed in terms of
those for $\mathbb{I}_g$.

To obtain $|\mathbb{L}_g|$ from $|\mathbb{I}_i|_{i\leq g}$, we must add to $|\mathbb{I}_g|$
graphs $G$ with more than one component and such that $\gamma_-(G)=g$.
According to Lemma \ref{lem:disjoint graphs}, each component of such a graph $G$ separates a
lower genus. Lemma \ref{lem:disjoint graphs2} then implies that:
\begin{equation*}
\begin{array}{ccl}
|\mathbb{L}_2| & = & |\mathbb{I}_0||\mathbb{I}_0||\mathbb{I}_0| + |\mathbb{I}_0||\mathbb{I}_1|+|\mathbb{I}_2|\\
|\mathbb{L}_3| & = & |\mathbb{I}_0||\mathbb{I}_0||\mathbb{I}_0||\mathbb{I}_0| + |\mathbb{I}_0||\mathbb{I}_0||\mathbb{I}_1| + |\mathbb{I}_0||\mathbb{I}_2| +
|\mathbb{I}_1||\mathbb{I}_1| + |\mathbb{I}_3|
\end{array}
\end{equation*}

Any graph that minimally separates genus $g-1$, also minimally separates genus $g$
(just add a handle to one of the components of the complement of the graph in a
minimal separating embedding).
Thus the total count of all minimal separating graphs in genus $g$ can be
computed as follows
\begin{equation*}
|\mathbb{G}_g| = \sum_{i=0}^g|\mathbb{L}_i|
\end{equation*}
\end{proof}

\begin{figure}[pbth]
\center
\includegraphics[width=4in]{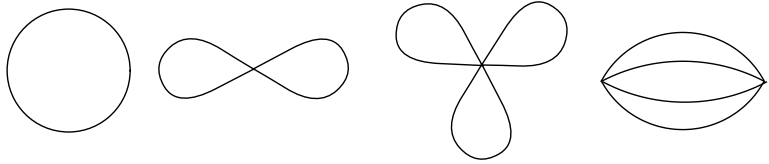}
\caption{ \emph{$\mathbb{I}_0$ consists of one graph (the circle).
The other three constitute the set $\mathbb{I}_1$.
From left to right, respectively graphs 1, 2, 3, and 8 of the table.}}
\label{fig:meta1}
\end{figure}

\begin{figure}[pbth]
\center
\includegraphics[width=2.6in]{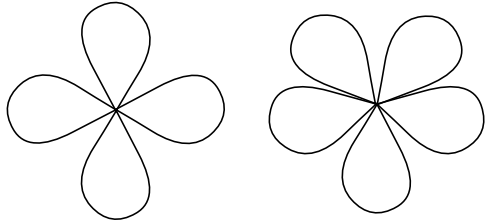}
\caption{ \emph{Graphs in $\mathbb{I}_2$ with one vertex.
From left to right, respectively graphs 4, and 5 of the table.}}
\label{fig:meta2}
\end{figure}

\begin{figure}[pbth]
\center
\includegraphics[width=4.7in]{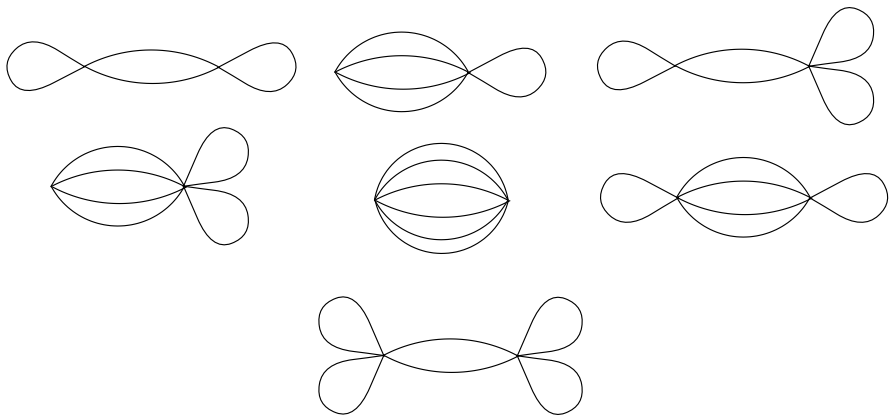}
\caption{ \emph{Graphs in $\mathbb{I}_2$ with two vertices.
From left to right first row, respectively graphs 10, 11, and 12 of the table.
On the second row, graphs 13, 14 , and 15. Last row, graph 16.}}
\label{fig:meta3}
\end{figure}

\begin{figure}[pbth]
\center
\includegraphics[width=4.0in]{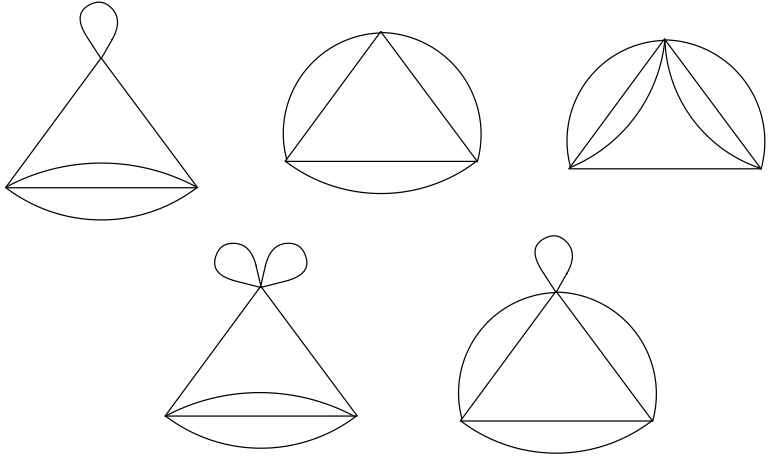}
\caption{ \emph{Graphs in $\mathbb{I}_2$ with three vertices.
From left to right, top row, graphs 19, 20, and 21; below graphs 22 and 23.}}
\label{fig:meta4}
\end{figure}

\begin{figure}[pbth]
\center
\includegraphics[width=4in]{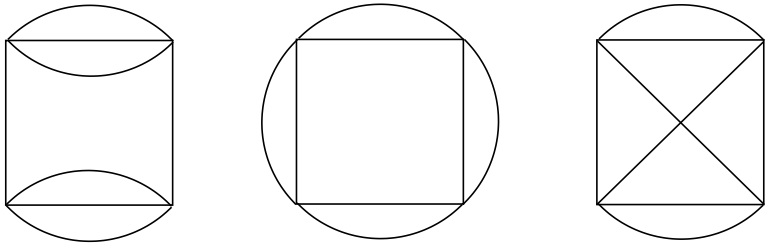}
\caption{ \emph{Graphs in $\mathbb{I}_2$ with four vertices.
From left to right, respectively graphs 24, 25, and 26.}}
\label{fig:meta5}
\end{figure}

The graphs in $\mathbb{I}_0$, $\mathbb{I}_1$, and $\mathbb{I}_2$ are
illustrated in Figures \ref{fig:meta1} through \ref{fig:meta5}.

In Table 1, we list all topologically distinct embeddings of connected minimal separating sets
in genus 2.
As an example, consider graph 3 (see Figure \ref{fig:meta1}), the bouquet of three circles.
This table tells us that there are exactly three minimal separating embeddings of that graph in
a genus 2 torus. If we remove a closed neighborhood $C$
of the embedded graph from the surface $S$, then we obtain two regions that have $n_1$
and $n_2$, respectively, boundary components. So, in the first of these three embeddings,
the boundary of the first regions consists of one (topological) circle, and the boundary
of the second region consists of three circles. In the second embedding, both regions
have a boundary formed by two circles. In the third each regions has a boundary consisting of one
circle. But one of these regions must have a handle attached to it ($g_2=1$). As a consequence,
the same rotation system defines an embedding in a torus of one genus lower (i.e. of genus 1).

In Table 2, we list all graphs in $\mathbb{I}_3$. For each of these graph we list one
rotation system that defines a minimal separating embedding in genus 3.
Note that by definition $(g_1,g_2)=(0,0)$, and this is therefore not listed in the table.
As in Table 1, the numbers of boundary components in $S\backslash C$ for the rotation system
is also given. In contrast to Table 1, however, we do not list all rotation systems that define
a minimal separating embedding for the underlying, as this proved too expensive with our
current methods. Table 2 is split up in partial tables 2.1 through 2.6. The graphs in $\mathbb{I_3}$
with $i$ vertices are grouped in partial Table 2.i. For instance, the complete graph
on 5 vertices, or $K_5$, corresponds to graph 11 in table 2.5. It is a little harder to see
that the octahedron (the platonic solid) corresponds to graph 1 of Table 2.6.

\section*{Final Remarks}

The algorithm in Proposition \ref{prop:numerical-count} is computationally extremely expensive.
The number of (multi)graphs that satisfy Definition \ref{def:c_g} in genus 3 is very large.
In addition, for each graph $G$ there are in principle
${\prod_{v \in V(G)} (\operatorname{deg}(v)!)}$ possible rotation systems.
Even using the techniques mentioned in step 2 of that Proposition to reduce this number,
it still grows far too quickly for our current algorithm to be used for genus $g>3$ on a
contemporary desktop computer.

The graphs for $g\leq2$ are listed
listed and numbered in the Appendix along with their two-sided rotation systems. The set $\mathbb{G}_{1}$ consists of graphs 1, 2, 3, 6, and 8
(in agreement with \cite{Bernhard/Veerman}). The set $\mathbb{G}_{2}$ consists of graphs 1 through 26.

We end with some interesting observations that were however not used in the
computation of the cardinalities discussed in the last section.
The first is that a graph minimally separates
in some genus if and only if all its vertices have even degree.

\begin{thm} A graph $G$ is in $\mathbb{G}$ if and only if all its vertices have
positive, even degree.
\label{thm:separating graphs}
\end{thm}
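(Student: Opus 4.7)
The forward direction is immediate from the preceding results: if $G \in \mathbb{G}_{g}$ for some $g$, then no vertex of $G$ is isolated (condition~1 of Theorem~\ref{thm:T1}), and every vertex has even degree by Lemma~\ref{lem:L5}. Hence every vertex of $G$ has positive, even degree.

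For the converse, the plan is to invoke Theorem~\ref{thm:T3}, which reduces the problem to exhibiting a two-sided rotation system on any graph $G$ whose vertices all have positive even degree. First I would fix an Eulerian orientation of $G$, which exists precisely because every degree is even (decompose the edge set into edge-disjoint circuits and orient each circuit coherently; a loop at $v$ contributes one in-end and one out-end to $v$). At each vertex $v$ of degree $2k_v$ this yields $k_v$ incoming and $k_v$ outgoing edge-ends, so I can define the rotation system $\mathcal{R}$ at $v$ by any cyclic ordering in which incoming and outgoing edge-ends alternate. Alternation is possible at every vertex because the two classes have equal cardinality.

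To verify that $\mathcal{R}$ is two-sided, I would run the face-tracing algorithm of \cite{Gross}. Starting from any directed edge $(u,v)$, the edge-end at $v$ is incoming, so by the alternation the next edge-end in the rotation at $v$ is outgoing and belongs to some edge $e'$ oriented away from $v$; the trace therefore continues along $e'$ in its natural orientation. By induction, every boundary walk that begins with an edge traversed in its natural orientation uses only such edges, and any walk beginning against orientation stays against it throughout. Letting $B_{1}$ be the set of ``pro-orientation'' boundary walks and $B_{2}$ the ``anti-orientation'' walks, every undirected edge appears in exactly one walk of each class (once via its natural directed version and once via its reverse). This is precisely the partition required by Definition~\ref{def:two-sided-rotaion-system}, so $\mathcal{R}$ is two-sided, and Theorem~\ref{thm:T3} delivers a minimal separating embedding of $G$ (with genus determined by Theorem~\ref{thm:T4}).

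The step I expect to be most delicate is the face-tracing verification in the presence of loops and multiple edges, where a single edge may appear twice in the rotation at a single vertex (contributing one in-end and one out-end) or where parallel edges may carry opposite orientations. The crucial observation is that the alternation condition depends only on the in/out type of each edge-end, not on the identity of the edge it belongs to, so the local alternation argument still forces each face trace to keep a consistent orientation. Once this is checked, the partition of boundary walks into $B_{1}$ and $B_{2}$ is forced and the rest of the verification is routine bookkeeping.
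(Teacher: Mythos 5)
Your proof is correct, but the converse is established by a genuinely different construction than the paper's. The paper first reduces to connected graphs via Lemma \ref{lem:disjoint graphs2}, takes an Eulerian \emph{circuit} $\gamma=e_1e_2\cdots e_n$ (this is where connectivity is needed), and builds the rotation system so that consecutive circuit edges $e_{k-1},e_k$ are adjacent in the rotation at their common vertex; then $\gamma$ itself is a single boundary walk containing every edge once, and the two-sided partition is simply $B_1=\{\gamma\}$ versus $B_2=$ all remaining walks, which must also cover every edge exactly once since each edge occurs twice among all boundary walks. You instead take an Eulerian \emph{orientation} (from a circuit decomposition, which needs no connectivity), impose in/out alternation in each rotation, and prove by the face-tracing induction that every boundary walk is coherently pro- or anti-orientation, giving the symmetric partition $B_1=$ pro-walks, $B_2=$ anti-walks. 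Both arguments ultimately rest on the same fact that each edge is traversed exactly twice, once per direction, in the face tracing. Your route buys two things: it dispenses with the reduction to connected graphs (so Lemma \ref{lem:disjoint graphs2} is not needed as a prerequisite), and it treats the two sides symmetrically; the cost is that your face-tracing verification (the orientation-preservation induction, especially at loops and parallel edges, which you correctly flag and resolve at the level of edge-ends) carries the real weight of the proof, whereas the paper's construction needs only the single observation that $\gamma$ traces one face and then finishes by counting.
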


\begin{proof} One direction is proved by part 1 of theorem \ref{thm:T1} and lemma \ref{lem:L5}.
So we need to prove that if $G$ has all degrees even and positive,
then it minimally separates. By lemma \ref{lem:disjoint graphs2},
it is sufficient to prove this for connected graphs.

Let $G$ be a connected non-empty graph of even degree. Then $G$ admits an Eulerian circuit
$\gamma$ consisting of all its $n$ edges. Label these edges $e_1$, $e_2$... $e_n$, in the
order in which they occur in the circuit $\gamma$. Denote the first vertex of $e_i$, as we
traverse $\gamma$, by $v_i$. Note that $v_i$ maps $\{1,\cdots n\}$ onto the vertex set,
but it is not generally a bijection.

Construct a rotation system ${\cal R}$ for $G$ as follows. At $v_1$ add the edge pair $e_{n}-e_1$.
For all $k\in\{2,\cdots n\}$, at $v_k$ add the edge pair $e_{k-1}-e_k$.
No matter what the ordering of the pairs is at any one vertex in the resulting rotation
system, one boundary walk is given by the edge sequence $\gamma$, which contains
every edge exactly once. Thus the set of all boundary walks can be partitioned
into $B_1=\gamma$ on the one hand, and $B_2$ is the remainder of the boundary walks.
Since all edges must occur twice, $B_2$ also contains every edge exactly once.
And so ${\cal R}$ is two-sided.
\end{proof}

The next observation takes us back to the mediatrices that originally motivated our
study of minimal separating sets (see \cite{Bernhard/Veerman} and \cite{Veerman/Bernhard}).

\begin{thm} For any two-sided rotation system $\mathcal{R}$ corresponding to a graph $G$, there
is a a minimally separating embedding $\phi:G\rightarrow S$ to a (oriented) surface $S$ so that $\phi(G)$
is a mediatrix in that surface.
\label{thm:mediatrices}
\end{thm}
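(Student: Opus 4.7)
The plan is to invoke Theorem \ref{thm:T3} to produce a minimal separating embedding $\phi : G \to S$ whose rotation system is $\mathcal{R}$, and then to endow $S$ with a geodesic metric together with a pair of points $p \in R_1$, $q \in R_2$ (one in each region) so that $\phi(G) = \{x \in S : d(x,p) = d(x,q)\}$. Since the embedding already satisfies the three conditions of Theorem \ref{thm:T1}, what remains is purely a matter of choosing the metric.

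My construction uses the reduced band decomposition $\mathcal{D}$ as scaffolding. On $\mathcal{D}$ I would use the natural reflection-symmetric metric: each $1$-band around an edge $e$ is modeled on the flat rectangle $[0,L_e]\times[-\epsilon,\epsilon]$ with $\phi(e)$ lying on the line $y=0$, and each $0$-band around a vertex $v$ of degree $2d$ is modeled on the flat disk of radius $\epsilon$ centered at $\phi(v)$. The two-sidedness of $\mathcal{R}$ guarantees that the $2d$ wedges of that disk alternate between $R_1$-side and $R_2$-side, so the $1$-bands attach consistently to the $0$-bands, and every point of $\phi(G)$ then lies at distance exactly $\epsilon$ from both $\partial\mathcal{D}\cap\bar R_1$ and $\partial\mathcal{D}\cap\bar R_2$. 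On each closure $\bar A_i$ of $R_i\setminus\mathcal{D}$ I would place a standard cone metric of height $D$ with apex $p_i$, realizing $\bar A_i$ as $\partial\bar A_i\times[0,D]$ with $\partial\bar A_i\times\{0\}$ collapsed to the single point $p_i$; this puts every boundary point of $\bar A_i$ at distance exactly $D$ from $p_i$. Setting $p:=p_1$ and $q:=p_2$, every $x\in\phi(G)$ then satisfies $d(x,p)=\epsilon+D=d(x,q)$ (cross one band, then ascend one cone), so $\phi(G)$ is contained in the equidistant set. A short case analysis (for $x$ in the interior of a $1$-band, of a $0$-band, or of an $\bar A_i$) shows that any $x\notin\phi(G)$ must cross $\phi(G)$ to reach the opposite apex and hence satisfies $d(x,p)\ne d(x,q)$, so the equidistant set is exactly $\phi(G)$.

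The hard part is justifying the cone construction on $\bar A_i$ when $n_i>1$: the quotient $\partial\bar A_i\times[0,D]/(\partial\bar A_i\times\{0\})$ is topologically a wedge of $n_i$ disks rather than the original surface $\bar A_i$, so the resulting metric on $S$ carries a conical singularity at $p_i$ of total angle $2\pi n_i$. This is still acceptable for the statement because geodesic distance, and therefore the mediatrix, is well-defined at isolated cone points. If a genuinely smooth Riemannian metric is wanted, one may first perform surgery on $\mathcal{D}$, adding thin tubes between pairs of boundary circles of $\mathcal{D}$ lying in the same $B_i$ (each such surgery raises the genus of $S$ by one, in the same spirit as Corollary \ref{cor:C1}), until $n_1=n_2=1$; the modified embedding is still minimal separating, and the cone metric now has $p$ and $q$ as regular points. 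Either variant yields the desired conclusion.
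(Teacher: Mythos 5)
Your overall strategy is the same as the paper's: run the surgery construction of Theorem \ref{thm:T3} to get a minimal separating embedding realizing $\mathcal{R}$, then metrize the result so that every point of $\phi(G)$ lies at distance $\epsilon$ from both sides of $\partial\mathcal{D}$, and every point of each side of $\partial\mathcal{D}$ lies at a fixed distance from a chosen center point in its region. The paper does exactly this, with $P_i\in\overline A_i$ and a distance $\Delta$ playing the role of your $D$, concluding $d(x,P_i)=\Delta+\epsilon$ for all $x\in\phi(G)$. (On one point you are more careful than the paper: you at least sketch why points off $\phi(G)$ fail to be equidistant, a containment the paper leaves implicit.)

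The genuine problem is your primary (cone) variant when $n_i>1$. Collapsing $\partial\overline A_i\times\{0\}$ to a single point, where $\partial\overline A_i$ has $n_i$ components, does not produce ``a conical singularity of total angle $2\pi n_i$'': the link of the resulting point $p_i$ is $n_i$ disjoint circles, so the quotient is a wedge of $n_i$ disks and is not a 2-manifold at $p_i$ at all. The defect is therefore topological, not metric --- the issue is not whether geodesic distance makes sense at a cone point, but that the resulting space is not the surface $S$ (nor any surface), whereas the theorem, and the paper's definition of mediatrix (which presupposes a Riemannian metric on a surface), require one. Your fallback variant does repair this: merging the boundary circles of $\mathcal{D}$ on each side by thin tubes until $n_1=n_2=1$ preserves the rotation system and the three conditions of Theorem \ref{thm:T1} (each surgery adds a handle inside one region, as in Corollary \ref{cor:C1}), after which your caps are honest disks; since the statement places no constraint on the genus of $S$, this proves the theorem as written. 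The paper resolves the same difficulty without changing the surface: it keeps $\overline A_i$ as the sphere with $n_i$ holes and adjusts the \emph{metric} instead, ``sewing trunks'' onto the boundary circles so that every point of $\partial\overline A_i$ lies at the same distance $\Delta$ from $P_i$. That yields the mediatrix realization on the irreducible (lowest-genus) surface produced by Theorem \ref{thm:T3}, with no singular points and no genus increase, which is a slightly stronger conclusion than your surgery variant delivers.
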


\begin{proof} The proof follows the second part of the proof of Theorem \ref{thm:T3} precisely.
We only need to specify two points $P_1$ and $P_2$ in $S$ and a distance, so that $\phi(G)$ is a mediatrix
in $S$ with respect to those two points. We summarize that argument using the notation of that proof.

First, the surface $M$ defined by the cellular embedding $\phi$ can be embedded in $\mathbb{R}^3$,
and inherit
its distance. Then without loss of generality, for a small enough positive $\epsilon$, the band
decomposition $\cal D$ can be taken to be the closure of the $\epsilon$ neighborhood of $\phi(G)$.
Thus given a point $x\in \phi(G)$, its distance to $\tilde B_1$ equals $\epsilon$ and the same holds
for its distance to $\tilde B_2$. Now for $i$ 1 and 2, choose a point $P_i$ in $\overline A_i$.
The $\overline A_i$ are spheres with $n_i$ disks cut out of them. Thus we can again
give these the inherited distance from $R^3$. For $\Delta$ large enough we can arrange it,
possibly by sewing ``trunks" onto the boundaries of the disks, that the distance from $P_i$
to any point of $\partial(\overline{A}_{i})$ equals $\Delta$. In the connected sum, therefore,
the distance from  $x\in \phi(G)$ to $P_i$ equals $\Delta +\epsilon$.
\end{proof}

For our final remark we need the following Definition (compare with Definition
\ref{defn:mingenus}). Recall the definition of irreducible embedding (Definition \ref{def:irreducible-embedding}).

\begin{defn} The largest irreducibly separated genus, $\gamma_+(G)$, of a graph $G$ equals
the maximal genus of the oriented surfaces in which $G$ can be irreducibly embedded
as a minimally separating set. It is $\infty$ if that set of (oriented) surfaces is either empty
or if their genus has no upper bound.
\label{defn:maxgenus}
\end{defn}

Denote by $\mathbb{G}_E$ the graphs all of whose vertices have even degree. Thus if
$G\in \mathbb{G}_E$, then $\gamma_-(G)$ and $\gamma_+(G)$ are finite by the previous
Theorem. So for these graphs there are always minimally separating embeddings.
There is an interesting relation between these quantities on the one hand and the
\emph{minimum genus} $\gamma_{min}(G)$ and the \emph{maximum genus} $\gamma_{max}(G)$
(see \cite{Chen}) on the other. While the problem of deciding the minimum genus of a graph $G$
is NP-complete (\cite{Thomassen}), there are explicit estimates for the maximum genus (\cite{Xuong}).
Thus the following Theorem enables us to give an explicit lower bound for $\gamma_-(G)$.

\begin{thm} For graphs $G$ in $\mathbb{G}_E$ we have the following relations:
\begin{equation*}
\begin{array}{ccl}
\gamma_-(G)&\geq & |E|-|V|-\gamma_{max}(G)\\
\gamma_+(G)&\leq & |E|-|V|-\gamma_{min}(G)
\end{array}
\end{equation*}
\label{thm:relations}
\end{thm}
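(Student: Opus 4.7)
The plan is to observe that a two-sided rotation system $\mathcal{R}$ on $G$ secretly encodes two graph embeddings of $G$ at once: on the one hand, $\mathcal{R}$ defines a cellular embedding of $G$ in some surface $M$ (by Theorem \ref{thm:unique-embedding}), whose genus $\gamma_{\mathcal{R}}$ is determined by requiring \emph{every} boundary walk to bound a disk; on the other hand, the construction in the proof of Theorem \ref{thm:T4} instead caps the boundary walks in two batches (indexed by $B_1$ and $B_2$), producing a minimal separating embedding into a surface of genus $\tfrac{|E|-|V|+n_1+n_2}{2}-1$ (when we take $g_1=g_2=0$, i.e.\ the irreducible case). The key point is that these two genera are complementary functions of the same parameter $n_1+n_2$.

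Carrying this out, I would first add the two Euler-characteristic identities. Filling in $n_1+n_2$ disks to build $M$ gives $\gamma_{\mathcal{R}} = \tfrac{|E|-|V|-(n_1+n_2)+2}{2}$, while the irreducible minimal separating genus is $g_S=\tfrac{|E|-|V|+(n_1+n_2)}{2}-1$. Adding these,
\begin{equation*}
\gamma_{\mathcal{R}} + g_S \;=\; |E|-|V|,
\end{equation*}
so fixing $\mathcal{R}$ the two genera are mirror images of one another about $|E|-|V|$. Consequently, minimising $g_S$ over two-sided rotation systems corresponds to maximising $\gamma_{\mathcal{R}}$ over the same set, and maximising $g_S$ over irreducible minimal separating embeddings corresponds to minimising $\gamma_{\mathcal{R}}$. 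Explicitly,
\begin{equation*}
\gamma_-(G) \;=\; |E|-|V|-\max_{\mathcal{R}\text{ two-sided}}\gamma_{\mathcal{R}},
\qquad
\gamma_+(G) \;=\; |E|-|V|-\min_{\mathcal{R}\text{ two-sided}}\gamma_{\mathcal{R}}.
\end{equation*}

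Finally, since every rotation system on $G$ yields a cellular embedding and vice versa, $\gamma_{\max}(G)$ and $\gamma_{\min}(G)$ are exactly the maximum and minimum of $\gamma_{\mathcal{R}}$ over \emph{all} rotation systems. Two-sided rotation systems form a subset, so maximising over them cannot exceed $\gamma_{\max}(G)$ and minimising over them cannot fall below $\gamma_{\min}(G)$. Substituting these two inequalities into the displayed formulas for $\gamma_-(G)$ and $\gamma_+(G)$ yields the asserted bounds. The hypothesis $G\in\mathbb{G}_E$ is what guarantees that the set of two-sided rotation systems is non-empty (via Theorem \ref{thm:separating graphs}), so that the extrema on the right make sense and $\gamma_\pm(G)$ are finite. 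The only subtle point in the argument is keeping track of the irreducibility condition in the definition of $\gamma_+$: without it, adding handles inside $A_1$ or $A_2$ would make $g_S$ unbounded, and the upper bound would be vacuous; the identity $\gamma_{\mathcal{R}}+g_S=|E|-|V|$ only uses the irreducible piece, which is precisely what Definition \ref{def:irreducible-embedding} isolates.
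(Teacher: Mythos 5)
Your proposal is correct and follows essentially the same route as the paper: the identity $\gamma_{\mathcal{R}}+g_S=|E|-|V|$ (which the paper writes as $|E|-|V|-g_C=\tfrac{|E|-|V|+N}{2}-1$), combined with the observation that two-sided rotation systems are a subset of all rotation systems, over which $\gamma_{\min}$ and $\gamma_{\max}$ are the extremal cellular-embedding genera. Your explicit handling of the irreducibility condition in $\gamma_+$ and of non-emptiness via Theorem \ref{thm:separating graphs} only makes explicit what the paper leaves implicit.
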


\begin{proof} Fix a graph $G\in \mathbb{G}_E$. Let $\mathcal{R}_G$ be the set of rotation
system for that graph, and $\mathcal{R}_G^*\subseteq \mathcal{R}_G$ the set of two-sided
rotation systems for that graph.
Every two-sided rotation system $R$ defines an irreducible minimally separating embedding
in a surface $S$ whose genus $g_S$ satisfies (Equations \ref{eq:extra-characteristic} and
\ref{eq:extra-genus} with $g_1=g_2=0$):
\begin{equation*}
g_S=\dfrac{|E|-|V|+N}{2}-1
\end{equation*}

On the other hand every rotation system $R$ for $G$ defines a cellular embedding
in a surface $C$ by gluing a disk onto each of the $N$ boundary components of $R$.
The genus $g_C$ of $C$ satisfies:
\begin{equation*}
|V|-|E|+N=2-2g_C\quad \Rightarrow \quad |E|-|V|-g_C=\dfrac{|E|-|V|+N}{2}-1
\end{equation*}
The last two equations imply:
\begin{equation*}
\gamma_-(G)= \min_{R\in \mathcal{R}_G^*}\, \dfrac{|E|-|V|+N}{2}-1 \geq
\min_{R\in \mathcal{R}_G }\, \dfrac{|E|-|V|+N}{2}-1 = |E|-|V|-\gamma_{max}(G)
\end{equation*}
The second relation is similarly proved by taking the maximum.
\end{proof}

% 1 = Behzad
% 2 = Bernhard/Veerman
% 3 = Gross
% 4 = Veerman/Bernhard
% 5 = Chen
% 6 = Xuong
% 7 = Thomassen

\clearpage

\begin{table}[ht]
\centering
\label{my-label1}
\begin{tabular}{|c|l|c|c|}
\hline
\multicolumn{4}{|c|}{\large{\bf Table 1}}\\
\hline
\# of Graph	& Rotation System & $(n_1,n_2)$ & $(g_1,g_2)$\\ \hline
1 	& $\begin{array}{l}v_0 : 0, 0\end{array}$ 			&(1,1)&	(0,2), (1,1)		 \\ \hline
2 	& $\begin{array}{l}v_0 : 0, 0, 1, 1 \end{array}$	 	&(1,2)&		(0,1), (1,0)	 \\ \hline
3 	& $\begin{array}{l}v_0 : 0, 0, 1, 1, 2, 2	\end{array}$ 	&(1,3)&	 (0,0)		 \\ \hline
  	& $\begin{array}{l}v_0 : 0, 1, 1, 0, 2, 2	\end{array}$ 	&(2,2)&	 (0,0)		 \\ \hline
 	& $\begin{array}{l}v_0 : 0, 1, 2, 0, 1, 2	\end{array}$ 	&(1,1)&	(0,1)	 \\ \hline
4 	& $\begin{array}{l}v_0 : 0, 0, 1, 2, 3, 1, 2, 3	\end{array}$ 	&(1,2)&	(0,0)	 \\ \hline
  	& $\begin{array}{l}v_0 : 0, 1, 2, 0, 3, 2, 1, 3	\end{array}$ 	&(1,2)&	(0,0)	 \\ \hline
5 	& $\begin{array}{l}v_0 : 0, 1, 2, 0, 1, 3, 4, 2, 3, 4	\end{array}$ &(1,1)& (0,0)   \\ \hline
 	& $\begin{array}{l}v_0 : 0, 1, 2, 0, 3, 2, 4, 3, 1, 4	\end{array}$ &(1,1)& (0,0)   \\ \hline
    & $\begin{array}{l}v_0 : 0, 1, 2, 3, 4, 0, 1, 2, 3, 4	\end{array}$ &(1,1)& (0,0)   \\ \hline
6 	& $\begin{array}{l}v_0 : 0, 0 \\ v_1 : 1, 1	\\ \end{array}$ 	&(2,2)&	(0,1)	 \\ \hline
7 	& $\begin{array}{l}v_0 : 0, 0, 1, 1 \\	v_1 : 2, 2 \\ \end{array}$ 	 &(2,3)& (0,0)      \\ \hline
8 	& $\begin{array}{l}v_0 : 0, 1, 2, 3 \\ v_1 : 0, 1, 2, 3 \\ \end{array}$ 		&(1,1)&	 (0,1) \\ \hline
	& $\begin{array}{l}v_0 : 0, 1, 2, 3 \\ v_1 : 0, 3, 2, 1 \\ \end{array}$ 		&(2,2)&	 (0,0) \\ \hline
9 	& $\begin{array}{l}v_0 : 0, 0 \\ v_1 : 1, 2, 3, 1, 2, 3 \\ \end{array}$ &(2,2)& (0,0)	   \\ \hline
10 	& $\begin{array}{l}v_0 : 0, 0, 1, 2 \\ v_1 : 1, 3, 3, 2 \\ \end{array}$ 	&(2,2)&	(0,0)     \\ \hline
  	& $\begin{array}{l}v_0 : 0, 0, 1, 2 \\ v_1 : 1, 2, 3, 3 \\ \end{array}$ 	&(2,2)&	(0,0)     \\ \hline
11  & $\begin{array}{l}v_0 : 0, 1, 2, 3 \\ v_1 : 0, 4, 2, 1, 4, 3 \\ \end{array}$ &(1,2)&	 (0,0)   \\ \hline
    & $\begin{array}{l}v_0 : 0, 1, 2, 3 \\ v_1 : 0, 4, 4, 1, 2, 3\\ \end{array}$ 	&(1,2)&	 (0,0)   \\ \hline
12 	& $\begin{array}{l}v_0 : 0, 0, 1, 2 \\ v_1 : 1, 3, 4, 2, 3, 4 \\ \end{array}$ 	&(1,2)&	  (0,0) \\ \hline
13 	& $\begin{array}{l}v_0 : 0, 1, 2, 3 \\ v_1 : 0, 4, 5, 1, 2, 3, 4, 5\\ \end{array}$ &(1,1)&(0,0) \\ \hline
    & $\begin{array}{l}v_0 : 0, 1, 2, 3 \\ v_1 : 0, 4, 5, 3, 4, 1, 2, 5\\ \end{array}$ &(1,1)&(0,0) \\ \hline
14 	& $\begin{array}{l}v_0 : 0, 1, 2, 3, 4, 5\\ v_1 : 0, 1, 2, 3, 4, 5\\ \end{array}$ &(1,1)& (0,0)	 \\ \hline
    & $\begin{array}{l}v_0 : 0, 1, 2, 3, 4, 5\\ v_1 : 0, 1, 4, 5, 2, 3\\ \end{array}$ &(1,1)& (0,0)	 \\ \hline
15 	& $\begin{array}{l}v_0 : 0, 1, 2, 0, 3, 4 \\ v_1 : 1, 5, 4, 2, 5, 3 \\ \end{array}$ &(1,1)& (0,0)	 \\ \hline
16 	& $\begin{array}{l}v_0 : 0, 1, 2, 0, 1, 3\\ v_1 : 2, 4, 5, 3, 4, 5 \\ \end{array}$ &(1,1)& (0,0)	 \\ \hline
\end{tabular}
\end{table}
\pagebreak

\begin{table}[ht]
\centering
\label{my-label2}
\begin{tabular}{|c|l|c|c|}
\hline
\multicolumn{4}{|c|}{\large{\bf Table 1 continued}}\\
\hline
\# of Graph	& Rotation System & $(n_1,n_2)$ & $(g_1,g_2)$\\ \hline
17 	& $\begin{array}{l}v_0 : 0, 0 \\ v_1: 1, 1 \\ v_2 : 2, 2 \\ \end{array}$ 	  &(3,3)&(0,0) \\ \hline
18 	& $\begin{array}{l}v_0 : 0, 0 \\ v_1: 1, 2, 3, 4 \\ v_2 : 1, 2, 3, 4 \\ \end{array}$&(2,2)&  (0,0) \\ \hline
19 	& $\begin{array}{l}v_0 : 0, 0, 1, 2 \\ v_1: 1, 3, 4, 5\\ v_2 : 2, 3, 4, 5\\ \end{array}$ &(1,2)&	 (0,0)\\ \hline
20 	& $\begin{array}{l}v_0 : 0, 1, 2, 3 \\ v_1: 0, 4, 5, 1\\ v_2 : 2, 3, 4, 5\\ \end{array}$ &(1,2)&	 (0,0)\\ \hline
    & $\begin{array}{l}v_0 : 0, 2, 1, 3 \\ v_1: 0, 4, 1, 5\\ v_2 : 2, 4, 3, 5\\ \end{array}$ &(1,2)&	 (0,0)\\ \hline
21 	& $\begin{array}{l}v_0 : 0, 1, 2, 3\\ v_1: 0, 4, 5, 6\\ v_2 : 1, 2, 3, 4, 5, 6\\ \end{array}$ &(1,1)&(0,0)\\ \hline
    & $\begin{array}{l}v_0 : 0, 1, 2, 3\\ v_1: 0, 4, 5, 6\\ v_2 : 1, 2, 5, 6, 3, 4\\ \end{array}$ &(1,1)&(0,0)\\ \hline
22 	& $\begin{array}{l}v_0 : 0, 1, 2, 3\\ v_1: 1, 2, 3, 6\\ v_2 : 0, 4, 5, 6, 4, 5\\ \end{array}$ &(1,1)&(0,0)\\ \hline
23 	& $\begin{array}{l}v_0 : 0, 1, 2, 3\\ v_1: 0, 1, 4, 5\\ v_2 : 2, 6, 5, 3, 6, 4\\ \end{array}$&(1,1)&(0,0)\\ \hline
    & $\begin{array}{l}v_0 : 0, 2, 1, 3\\ v_1: 0, 4, 1, 5\\ v_2 : 2, 6, 3, 5, 6, 4\\ \end{array}$&(1,1)&(0,0)\\ \hline
24& $\begin{array}{l}v_0 : 0, 1, 2, 3\\v_1: 4, 5, 6, 7\\v_2 : 0, 4, 5, 6\\v_3 : 1, 2, 3, 7\\ \end{array}$&(1,1)&(0,0)\\ \hline
25& $\begin{array}{l}v_0 : 0, 2, 1, 3\\v_1: 4, 6, 5, 7\\v_2 : 0, 4, 1, 5\\v_3 : 2, 7, 3, 6\\ \end{array}$ &(1,1)&(0,0)\\ \hline
26& $\begin{array}{l}v_0 : 0, 1, 2, 3\\v_1: 0, 4, 5, 6\\v_2 : 1, 4, 5, 7\\v_3 : 2, 3, 6, 7\\ \end{array}$ &(1,1)&(0,0)\\ \hline
  & $\begin{array}{l}v_0 : 0, 1, 2, 3\\v_1: 0, 4, 6, 5\\v_2 : 1, 4, 7, 5\\v_3 : 2, 3, 7, 6\\ \end{array}$ &(1,1)&(0,0)\\ \hline
\end{tabular}
\end{table}

\clearpage

\begin{table}[ht]
\centering
\label{my-label3}
\begin{tabular}{|c|l|c|}
\hline
\multicolumn{3}{|c|}{\large{\bf Table 2.1}}\\
\hline
\# of Graph	& Rotation System & $(n_1,n_2)$ \\ \hline
1 	& $\begin{array}{l}v_0 : 0, 0,1,2,3,1,2,4,5,3,4,5\\ \end{array}$ 	  &(1,2)\\ \hline
2 	& $\begin{array}{l} v_0: 0,1,2,0,1,3,4,2,3,5,6,4,5,6\\ \end{array}$ 	  &(1,1)\\ \hline
\end{tabular}\end{table}

\begin{table}[ht]
\centering
\label{my-label4}
\begin{tabular}{|c|l|c|}
\hline
\multicolumn{3}{|c|}{\large{\bf Table 2.2}}\\
\hline
\# of Graph	& Rotation System & $(n_1,n_2)$ \\ \hline
1 	& $\begin{array}{l}v_0 : 0, 1, 2, 3, 4, 5, 6, 7\\ v_1: 0, 1, 2, 3, 4, 5, 6, 7\\ \end{array}$ 	  &(1,1)\\ \hline
2 	& $\begin{array}{l}v_0 : 0, 1, 2, 3, 4, 5, 6, 0, 1, 7 \\ v_1: 2, 3, 4, 5, 6, 7 \\ \end{array}$&(1,1) \\ \hline
3 	& $\begin{array}{l}v_0 : 0, 1 ,2 ,3, 4, 0, 5, 6\\ v_1: 1, 7, 3, 4 ,6, 2, 7, 5\\ \end{array}$ &(1,1)\\ \hline
4 	& $\begin{array}{l} v_0: 0, 1, 2, 3, 4, 5, 6 ,0 ,1, 2, 3, 7\\ v_1 : 4, 5, 6, 7\\ \end{array}$ &(1,1)\\ \hline
5   & $\begin{array}{l}v_0:0, 1, 2, 3, 4, 0, 5, 6, 1, 2 \\ v_1 : 3, 7, 6, 4, 7, 5\\ \end{array}$ &(1,1)\\ \hline
6 	& $\begin{array}{l}v_0 : 0, 1, 2, 3, 4, 0, 1, 5\\ v_1: 2, 6, 7, 3, 4, 5, 6, 7\\ \end{array}$ &(1,1)\\ \hline
7   & $\begin{array}{l}v_0 : 0, 1, 2, 3, 4, 0,  1, 2, 3, 5\\ v_1: 2, 6, 7, 3, 4, 5, 6, 7\\ \end{array}$ &(1,1)\\ \hline
8 	& $\begin{array}{l}v_0 : 0, 0, 1, 2, 3, 4 ,5, 6\\ v_1: 1, 2, 3, 4, 5, 6\\ \end{array}$ &(1,2)\\ \hline
9 	& $\begin{array}{l}v_0 : 0, 1 ,2, 0, 1, 2, 3, 4, 5, 6\\ v_1: 3, 4, 5, 6\\ \end{array}$&(1,2)\\ \hline
10   & $\begin{array}{l}v_0 : 0, 1, 2, 3, 4, 0, 1, 5\\ v_1: 2, 6, 4, 3, 6, 5 \end{array}$&(1,2)\\ \hline
11  & $\begin{array}{l}v_0 : 0, 1, 2, 3, 4, 0, 1, 2, 3, 5\\v_1: 4 ,6, 6, 5 \end{array}$&(1,2)\\ \hline
12& $\begin{array}{l}v_0 : 0, 1 ,2, 0, 1, 2, 3, 4\\v_1: 3, 5, 6, 4, 5, 6\\ \end{array}$ &(1,2)\\ \hline
13& $\begin{array}{l}v_0 : 0, 1, 2, 0, 1, 2, 3, 4\\v_1: 3, 5, 5, 4\\ \end{array}$ &(1,3)\\ \hline
\end{tabular}
\end{table}

\begin{table}[ht]
\centering
\label{my-label5}
\begin{tabular}{|c|l|c||c|l|c|}
\hline
\multicolumn{6}{|c|}{\large{\bf Table 2.3}}\\
\hline
\# of Graph	& Rotation System & $(n_1,n_2)$&\# of Graph	& Rotation System & $(n_1,n_2)$ \\ \hline
1 	& $\begin{array}{l}v_0 : 0, 1, 2, 3\\ v_1: 0, 4, 4, 1\\ v_2:2, 5, 5, 3 \end{array}$ 	  &(1,4)&17 	 & $\begin{array}{l}v_0 : 0, 1 ,2, 3\\ v_1: 0, 4, 5, 1\\ v_2:2, 6, 7, 3, 4, 5, 6, 7\end{array}$ 	  &(1,2)\\ \hline
2 	& $\begin{array}{l}v_0 : 0, 0, 1, 2 \\ v_1: 1, 3, 3, 4 \\ v_2: 2, 5, 5, 4 \end{array}$&(2,3)&18 	 & $\begin{array}{l}v_0 : 0, 1, 2, 3\\ v_1: 0, 4 ,5, 6, 7, 1\\ v_2:2, 3, 4, 5, 6, 7 \end{array}$ 	  &(1,2) \\ \hline
3 	& $\begin{array}{l}v_0 : 0, 1 ,2, 3\\ v_1:4, 4, 5, 6\\ v_2:0, 1 ,2 ,3 ,5, 6\\ \end{array}$ &(2,2)&19 	 & $\begin{array}{l}v_0 : 0, 1, 2, 3\\ v_1: 0, 4 ,5 ,6, 4 ,1\\ v_2:2, 7, 6, 3, 7, 5 \end{array}$ 	  &(1,2)\\ \hline
4 	& $\begin{array}{l} v_0: 0, 1, 2, 3\\ v_1 :0, 4, 4, 5\\v_2: 1, 6, 3, 4, 2, 6, 5 \end{array}$ &(1,3)&20 	& $\begin{array}{l}v_0 : 0, 1, 2, 3\\ v_1: 0, 4, 5, 1, 4, 5\\ v_2:2, 6 ,7 ,3, 6, 7 \end{array}$ 	  &(1,2)\\ \hline
5   & $\begin{array}{l}v_0:0, 1, 2 ,3\\ v_1 : 0, 4, 4 ,1\\ v_2: 2, 5 ,6 ,3 ,5 ,6\end{array}$ &(2,2)&21 	 & $\begin{array}{l}v_0 : 0, 2 ,3, 1 ,4 ,5\\ v_1: 0, 6 ,6 ,1\\ v_2:2 7, 5 ,3 ,7 ,4 \end{array}$ 	  &(1,2)\\ \hline
6 	& $\begin{array}{l}v_0 : 0 0 1 2\\ v_1: 3, 3 ,4 ,5\\ v_2:1, 6, 4, 2, 6, 5 \end{array}$ &(1,3)&22 	 & $\begin{array}{l}v_0 : 0, 0, 1, 2\\ v_1: 3, 4, 5, 3, 4, 6\\ v_2:1, 7, 5, 2, 7, 6 \end{array}$ 	  &(1,2)\\ \hline
7   & $\begin{array}{l}v_0 :0, 0, 1, 2\\v_1: 1, 3 ,3 ,4 \\v_2: 2, 5, 6, 4 ,5, 6 \end{array}$ &(1,3)&23 	 & $\begin{array}{l}v_0 : 0, 0, 1, 2 \\ v_1: 1, 3, 4, 5, 3, 6\\ v_2:2, 7, 4, 6, 7, 5 \end{array}$ 	  &(1,2)\\ \hline
8 	& $\begin{array}{l}v_0 : 0, 1, 2, 3\\ v_1: 4, 5, 6, 7\\ v_2: 0, 1 ,2 ,3 ,4, 5, 6, 7\end{array}$ &(1,2)&24 	& $\begin{array}{l}v_0 : 0, 0 ,1, 2\\ v_1: 1, 3, 4 ,5, 3 ,6\\ v_2:2, 6, 7, 5, 6, 7 \end{array}$ 	  &(1,2)\\ \hline
9 	& $\begin{array}{l}v_0 : 0, 1, 2, 3\\ v_1:4, 4, 5, 6\\v_2: 0,7, 2, 3, 5, 1, 7, 6 \end{array}$&(1,2)&25 	& $\begin{array}{l}v_0 : 0, 1, 2, 3\\ v_1:4, 5, 6, 7\\ v_2:0, 8, 2, 3, 4, 1, 6, 7, 8, 5 \end{array}$ 	  &(1,1)\\ \hline
10   & $\begin{array}{l}v_0 : 0, 1, 2, 3\\ v_1: 4, 5, 6, 4, 5, 7\\ v_2:0 ,1, 2 ,3 ,6 ,7\\ \end{array}$&(1,2)&26 	& $\begin{array}{l}v_0 : 0, 1, 2, 3\\ v_1: 4, 5, 6,  4,7,8\\ v_2:0,1,2, 5,7, 3,6,8 \end{array}$ 	  &(1,1)\\ \hline
11  & $\begin{array}{l}v_0 : 0, 1 ,2 ,3\\v_1: 0, 4, 5, 6\\v_2:1, 7, 3, 2, 5, 6, 7, 4 \end{array}$&(1,2)&27 	& $\begin{array}{l}v_0 : 0, 1, 2 ,3\\ v_1:4, 5, 6, 4, 5, 7\\ v_2:0,8,2,3,6,1,8,7 \end{array}$ 	  &(1,1)\\ \hline
12& $\begin{array}{l}v_0 : 0, 1, 2, 3\\v_1: 0, 4, 4, 5\\v_2:1, 6, 7, 2, 3, 5, 6, 7 \end{array}$ &(1,2)&28 	 & $\begin{array}{l}v_0 : 0, 1, 2, 3\\ v_1: 0, 4, 5, 6\\ v_2:1, 7, 8, 2, 3, 4, 5 ,6 ,7 ,8 \end{array}$&(1,1)\\ \hline
13& $\begin{array}{l}v_0 : 0, 1, 2, 3\\v_1: 0, 4, 5, 6, 4, 7\\v_2: 1, 2, 3, 6, 5, 7 \end{array}$ &(1,2)&29 	& $\begin{array}{l}v_0 : 0, 1, 2, 3\\ v_1: 0, 4, 5,6,7,8\\ v_2:1,2,3,4,5,6,7,8 \end{array}$ 	  &(1,1)\\ \hline
14 	& $\begin{array}{l}v_0 : 0, 1, 2, 3\\ v_1: 0, 4, 5, 6, 4, 5\\ v_2:1, 7, 3, 2, 7, 6 \end{array}$ 	  &(1,2)&30 	& $\begin{array}{l}v_0 : 0, 1, 2 ,3\\ v_1: 0, 4, 5,6 ,4, 7\\ v_2:1,8,3,6,8,2,5,7 \end{array}$ 	  &(1,1)\\ \hline
15 	& $\begin{array}{l}v_0 : 0, 1, 2, 3 \\ v_1: 0 ,4 ,5 ,6, 4, 7, 6, 5 \\ v_2: 1, 2, 3, 7 \end{array}$&(1,2)&31 	& $\begin{array}{l}v_0 : 0, 1, 2, 3\\ v_1: 0, 4, 5,6,4,5\\ v_2:1,7,8,2,3,6,7,8 \end{array}$ 	  &(1,1) \\ \hline
16 	& $\begin{array}{l}v_0 : 0, 1, 2, 3 ,4 ,5\\ v_1:0, 6, 6, 7\\ v_2:1, 2, 3, 4, 5, 7\\ \end{array}$ &(1,2)&32 	& $\begin{array}{l}v_0 : 0, 1, 2, 3\\ v_1: 0, 4, 6,7,8, 4, 5\\ v_2:1,2,3,6,7,8 \end{array}$ 	  &(1,1)\\ \hline
\end{tabular}
\end{table}

\begin{table}[ht]
\centering
\label{my-label6}
\begin{tabular}{|c|l|c||c|l|c|}
\hline
\multicolumn{6}{|c|}{\large{\bf Table 2.3 continued}}\\
\hline
\# of Graph	& Rotation System & $(n_1,n_2)$&\# of Graph	& Rotation System & $(n_1,n_2)$ \\ \hline
33 	& $\begin{array}{l}v_0 : 0, 1, 2, 3\\ v_1: 0, 4, 5, 6, 7, 8, 4, 5, 6, 7\\ v_2:1,2,3,6,7,8 \end{array}$ 	  &(1,1)&40 	& $\begin{array}{l}v_0 : 0, 1, 2, 3,4,5\\ v_1: 0,6,7,1,2,8\\ v_2:3,4,5,6,7,8\end{array}$ 	  &(1,1)\\ \hline
34 	& $\begin{array}{l}v_0 : 0, 1, 2,3,4,5 \\ v_1: 0,6,7,8,6,7 \\ v_2:1,2,3,4,5,8\end{array}$&(1,1)&41 	 & $\begin{array}{l}v_0 :0,1,3,2,4,5\\ v_1:0,6,7,1,6,2\\ v_2:3,8,4,7,8,5 \end{array}$ 	  &(1,1) \\ \hline
35 	& $\begin{array}{l}v_0 : 0, 1, 2, 3\\ v_1:0,1,4 ,5\\ v_2:2,6,7,8,5,3,6,7,8,4\\ \end{array}$ &(1,1)&42 	 & $\begin{array}{l}v_0 :0,1,2,0,3,4\\ v_1:1,5,6,7,5,6\\ v_2:2,8,3,7,8,4 \end{array}$ 	  &(1,1)\\ \hline
36 	& $\begin{array}{l} v_0: 0, 1, 2, 3\\ v_1 :0, 4,5,1,6,7\\v_2:2,8,4,5,7,3,8,6 \end{array}$ &(1,1)&43 	 & $\begin{array}{l}v_0 :0,1,2,0,3,4\\ v_1:1,5,6,2,5,7\\ v_2:3,8,6,4,8,7\end{array}$ 	  &(1,1\\ \hline
37   & $\begin{array}{l}v_0:0 ,1, 2, 3\\ v_1 : 0, 4 ,5 ,1,4,6\\ v_2: 2,7,8,3,6,5,7,8\end{array}$ &(1,1)&44 	& $\begin{array}{l}v_0 :0,1,3,0,2,4\\ v_1:1,5,6,2,5,6\\ v_2:3,7,8,4,7,8 \end{array}$ 	  &(1,1)\\ \hline
38 	& $\begin{array}{l}v_0 : 0,1,2,3,4,5\\ v_1:0,6,7,1,6,8\\ v_2:2,3,4,5,8,7\end{array}$ &(1,1)&45 	& $\begin{array}{l}v_0 :0,1,2,0,1,3\\ v_1:2,4,5,6,4,5\\ v_2:3,7,8,6,7,8 \end{array}$ 	  &(1,1)\\ \hline
39   & $\begin{array}{l}v_0 :0,2,3,1,4,5\\v_1:0,6,7,1,6,7\\v_2: 2,8,5,3,8,4\end{array}$ &(1,1)&&&\\ \hline
\end{tabular}
\end{table}

\begin{table}[ht]
\centering
\label{my-label7}
\begin{tabular}{|c|l|c||c|l|c|}
\hline
\multicolumn{6}{|c|}{\large{\bf Table 2.4}}\\
\hline
\# of Graph	& Rotation System & $(n_1,n_2)$&\# of Graph	& Rotation System & $(n_1,n_2)$ \\ \hline
1 	& $\begin{array}{l}v_0 : 0, 1, 2, 3\\ v_1: 4,4,5,6\\ v_2:0,7,7,5\\v_3:1,2,3,6\end{array}$ 	  &(2,2)&13 	& $\begin{array}{l}v_0 : 0, 1,2,3\\ v_1: 4,5,6,7\\ v_2:0,1,4,5\\v_3:2,8,6,3,8,7\end{array}$ 	  &(1,2)\\ \hline
2 	& $\begin{array}{l}v_0 : 0, 1, 2, 3\\ v_1: 4,4,5,6\\ v_2:0,7,5,6\\v_3:1,2,3,7\end{array}$ 	  &(2,2)&14 	& $\begin{array}{l}v_0 : 0, 1,2,3\\ v_1: 4,4,5,6\\ v_2:0,1,7,8\\v_3:2,5,8,3,6,7\end{array}$ 	  &(1,2)\\ \hline
3 	& $\begin{array}{l}v_0 : 0, 1, 2, 3\\ v_1: 4,4,5,6\\ v_2:0,7,5,1\\v_3:2,3,7,6\end{array}$ 	  &(1,3)&15 	& $\begin{array}{l}v_0 : 0, 1,2,3\\ v_1: 4,4,5,6\\ v_2:0,1,7,5\\v_3:2,8,6,3,8,7\end{array}$ 	  &(1,2)\\ \hline
4 	& $\begin{array}{l}v_0 : 0, 1, 2, 3\\ v_1: 0,4,4,5\\ v_2:1,6,6,7\\v_3:2,3,5,7\end{array}$ 	  &(2,2)&16 	& $\begin{array}{l}v_0 : 0, 1,2,3\\ v_1: 4,5,6,4,5,7\\ v_2:0,8,6,1\\v_3:2,3,8,7\end{array}$ 	  &(1,2)\\ \hline
5 	& $\begin{array}{l}v_0 : 0, 1, 2, 3\\ v_1: 4,5,6,7\\ v_2:4,5,6,8\\v_3:0,1,2,3,7,8\end{array}$ 	  &(1,2)&17 	& $\begin{array}{l}v_0 : 0, 1,2,3\\ v_1: 0,4,5,6\\ v_2:1,7,8,4\\v_3:2,3,6,7,8,5\end{array}$ 	  &(1,2)\\ \hline
6 	& $\begin{array}{l}v_0 : 0, 1, 2, 3\\ v_1: 4,5,6,7\\ v_2:0,8,8,4\\v_3:1,2,3,5,6,7\end{array}$ 	  &(1,2)&18 	& $\begin{array}{l}v_0 : 0, 1,2,3\\ v_1: 0,4,5,6\\ v_2:1,7,7,4\\v_3:2,8,6,3,8,7\end{array}$ 	  &(1,2)\\ \hline
7 	& $\begin{array}{l}v_0 : 0, 1, 2, 3\\ v_1: 4,5,6,7\\ v_2:0,8,4,5\\v_3:1,2,3,6\end{array}$ 	  &(1,2)&19 	& $\begin{array}{l}v_0 : 0, 1,2,3\\ v_1: 0,4,5,6\\ v_2:1,7,5,4\\v_3:2,8,6,3,8,7\end{array}$ 	  &(1,2)\\ \hline
8 	& $\begin{array}{l}v_0 : 0, 1, 2, 3\\ v_1: 4,5,6,7\\ v_2:0,4,5,6\\v_3:1,8,3,2,8,7\end{array}$ 	  &(1,2)&20 	& $\begin{array}{l}v_0 : 0, 1,2,3\\ v_1: 0,4,4,5\\ v_2:1,6,7,8,6,7\\v_3:2,3,5,8\end{array}$ 	  &(1,2)\\ \hline
9 	& $\begin{array}{l}v_0 : 0, 1, 2, 3\\ v_1: 4,4,5,6\\ v_2:0,7,8,5\\v_3:1,2,3,7,8,6\end{array}$ 	  &(1,2)&21 	& $\begin{array}{l}v_0 : 0, 1,2,3\\ v_1: 4,5,6,7\\ v_2:4,5,8,9\\v_3:0,1,2,6,8,3,7,9\end{array}$ 	  &(1,1)\\ \hline
10 	& $\begin{array}{l}v_0 : 0, 1, 2, 3\\ v_1: 4,4,5,6\\ v_2:0,7,8,5,7,8\\v_3:1,2,3,6\end{array}$ 	  &(1,2)&22 	& $\begin{array}{l}v_0 : 0, 1,2,3\\ v_1: 4,5,6,7\\ v_2:4,5,6,8\\v_3:0,9,2,3,7,1,9,8\end{array}$ 	  &(1,1)\\ \hline
11 	& $\begin{array}{l}v_0 : 0, 1, 2, 3\\ v_1: 4,4,5,6\\ v_2:0,7,5,8,7,6\\v_3:1,2,3,8\end{array}$ 	  &(1,2)&23 	& $\begin{array}{l}v_0 : 0, 1,2,3\\ v_1: 4,5,6,7\\ v_2:0,8,9,4\\v_3:1,2,3,5,6,8,9,7\end{array}$ 	  &(1,1)\\ \hline
12 	& $\begin{array}{l}v_0 : 0, 1, 2, 3\\ v_1: 4,5,6,4,5,7\\ v_2:0,8,6,7\\v_3:1,2,3,8\end{array}$ 	  &(1,2)&24 	& $\begin{array}{l}v_0 : 0, 1,2,3\\ v_1: 4,5,6,7\\ v_2:0,8,9,4,8,9\\v_3:1,2,3,5,6,7\end{array}$ 	  &(1,1)\\ \hline
\end{tabular}
\end{table}

\begin{table}[ht]
\centering
\label{my-label8}
\begin{tabular}{|c|l|c||c|l|c|}
\hline
\multicolumn{6}{|c|}{\large{\bf Table 2.4 continued}}\\
\hline
\# of Graph	& Rotation System & $(n_1,n_2)$&\# of Graph	& Rotation System & $(n_1,n_2)$ \\ \hline
25 	& $\begin{array}{l}v_0 : 0, 1, 2, 3\\ v_1: 4,5,6,7\\ v_2:0,8,4,5\\v_3:1,9,3,6,9,2,7,8\end{array}$ 	  &(1,1)&37 	& $\begin{array}{l}v_0 : 0,2,1,3\\ v_1: 4,6,5,7\\ v_2:0,4,1,5\\v_3:2,8,9,6,8,7,3,9\end{array}$ 	  &(1,1)\\ \hline
26 	& $\begin{array}{l}v_0 : 0, 1, 2, 3\\ v_1: 4,5,6,7\\ v_2:0,8,4,9,8,5\\v_3:1,2,3,6,7,9\end{array}$ 	  &(1,1)&38 	& $\begin{array}{l}v_0 : 0, 1,2,3\\ v_1: 4,5,6,7\\ v_2:0,8,4,1,9,5\\v_3:2,3,6,7,8,9\end{array}$ 	  &(1,1)\\ \hline
27 	& $\begin{array}{l}v_0 : 0, 1, 2, 3\\ v_1: 4,5,6,7\\ v_2:0,4,5,6\\v_3:1,8,9,2,3,7,8,9\end{array}$ 	  &(1,1)&39 	& $\begin{array}{l}v_0 : 0, 1,2,3\\ v_1: 4,5,6,7\\ v_2:0,8,5,1,8,4\\v_3:2,9,7,3,9,6\end{array}$ 	  &(1,1)\\ \hline
28 	& $\begin{array}{l}v_0 : 0, 1, 2, 3\\ v_1: 4,5,6,7\\ v_2:0,8,9,4,5,6\\v_3:1,2,3,8,9,7\end{array}$ 	  &(1,1)&40 	& $\begin{array}{l}v_0 : 0, 1,2,3\\ v_1: 4,6,7,5,8,9\\ v_2:0,1,4,5\\v_3:2,6,8,3,7,9\end{array}$ 	  &(1,1)\\ \hline
29 	& $\begin{array}{l}v_0 : 0, 1, 2, 3\\ v_1: 4,5,6,7,8,9\\ v_2:0,4,5,6\\v_3:1,2,3,7,8,9\end{array}$ 	  &(1,1)&41 	& $\begin{array}{l}v_0 : 0, 1,2,3\\ v_1: 4,5,6,4,7,8\\ v_2:0,1,9,5\\v_3:2,8,6,3,7,9\end{array}$ 	  &(1,1)\\ \hline
30 	& $\begin{array}{l}v_0 : 0, 1, 2, 3\\ v_1: 4,5,6,7,8,9\\ v_2:0,4,5,6,7,8\\v_3:1,2,3,9\end{array}$ 	  &(1,1)&42 	& $\begin{array}{l}v_0 : 0, 1,2,3\\ v_1: 4,5,7,4,6,8\\ v_2:0,1,5,6\\v_3:2,9,8,3,9,7\end{array}$ 	  &(1,1)\\ \hline
31 	& $\begin{array}{l}v_0 : 0, 1, 2, 3\\ v_1: 4,5,7,4,6,8\\ v_2:0,9,5,6\\v_3:1,2,3,9,7,8\end{array}$ 	  &(1,1)&43 	& $\begin{array}{l}v_0 : 0, 1,2,3\\ v_1: 4,5,6,4,5,7\\ v_2:0,1,8,9\\v_3:2,6,9,3,7,8\end{array}$ 	  &(1,1)\\ \hline
32 	& $\begin{array}{l}v_0 : 0, 1, 2, 3\\ v_1: 4,5,6,4,7,8\\ v_2:0,9,5,7,9,6\\v_3:1,2,3,8\end{array}$ 	  &(1,1)&44 	& $\begin{array}{l}v_0 : 0, 1,2,3\\ v_1: 4,5,6,4,5,7\\ v_2:0,1,8,6\\v_3:2,9,7,3,9,8\end{array}$ 	  &(1,1)\\ \hline
33 	& $\begin{array}{l}v_0 : 0, 1, 2, 3\\ v_1: 4,5,6,4,7,8\\ v_2:0,5,7,9,6,8\\v_3:1,2,3,9\end{array}$ 	  &(1,1)&45 	& $\begin{array}{l}v_0 : 0, 1,2,3\\ v_1: 0,4,5,6\\ v_2:1,7,8,4\\v_3:2,9,6,3,8,5,9,7\end{array}$ 	  &(1,1)\\ \hline
34 	& $\begin{array}{l}v_0 : 0, 1, 2, 3\\ v_1: 4,5,6,4,5,7\\ v_2:0,8,9,6\\v_3:1,2,3,8,9,7\end{array}$ 	  &(1,1)&46 	& $\begin{array}{l}v_0 : 0, 1,2,3\\ v_1: 0,4,5,6\\ v_2:1,7,8,9,7,4\\v_3:2,5,8,3,6,9\end{array}$ 	  &(1,1)\\ \hline
35 	& $\begin{array}{l}v_0 : 0, 1, 2, 3\\ v_1: 4,5,6,4,5,7\\ v_2:0,8,9,6,8,9\\v_3:1,2,3,7\end{array}$ 	  &(1,1)&47 	& $\begin{array}{l}v_0 : 0, 1,2,3\\ v_1: 0,4,5,6\\ v_2:1,7,8,4,7,8\\v_3:2,9,6,3,9,5\end{array}$ 	  &(1,1)\\ \hline
36 	& $\begin{array}{l}v_0 : 0, 1, 2, 3\\ v_1: 4,5,6,4,5,7\\ v_2:0,8,6,9,8,7\\v_3:1,2,3,9\end{array}$ 	  &(1,1)&48 	& $\begin{array}{l}v_0 : 0, 1,2,3\\ v_1: 0,4,5,6\\ v_2:1,4,5,7\\v_3:2,8,9,3,6,7,8,9\end{array}$ 	  &(1,1)\\ \hline
\end{tabular}
\end{table}

\begin{table}[ht]
\centering
\label{my-label9}
\begin{tabular}{|c|l|c||c|l|c|}
\hline
\multicolumn{6}{|c|}{\large{\bf Table 2.4 continued}}\\
\hline
\# of Graph	& Rotation System & $(n_1,n_2)$&\# of Graph	& Rotation System & $(n_1,n_2)$ \\ \hline
49 	& $\begin{array}{l}v_0 : 0, 1, 2, 3\\ v_1: 0,4,5,6\\ v_2:1,7,8,4,5,9\\v_3:2,3,6,7,8,9\end{array}$ 	  &(1,1)&52 	& $\begin{array}{l}v_0 : 0, 1,2,3\\ v_1: 0,4,5,6,4,7\\ v_2:1,8,5,9,8,6\\v_3:2,3,7,9\end{array}$ 	  &(1,1)\\ \hline
50 	& $\begin{array}{l}v_0 : 0, 1, 2, 3\\ v_1: 0,4,5,6\\ v_2:1,7,5,8,7,4\\v_3:2,9,6,3,9,8\end{array}$ 	  &(1,1)&53 	& $\begin{array}{l}v_0 : 0, 1,2,3\\ v_1: 0,4,5,6,4,5\\ v_2:1,7,8,9,7,8\\v_3:2,3,6,9\end{array}$ 	  &(1,1)\\ \hline
51 	& $\begin{array}{l}v_0 : 0, 1, 2, 3\\ v_1: 0,4,5,6,7,8\\ v_2:1,4,5,6,7,9\\v_3:2,3,8,9\end{array}$ 	  &(1,1)&  &&\\ \hline
\end{tabular}
\end{table}

\begin{table}[ht]
\centering
\label{my-label10}
\begin{tabular}{|c|l|c||c|l|c|}
\hline
\multicolumn{6}{|c|}{\large{\bf Table 2.5}}\\
\hline
\# of Graph	& Rotation System & $(n_1,n_2)$&\# of Graph	& Rotation System & $(n_1,n_2)$ \\ \hline
1 	& $\begin{array}{l}v_0 : 0, 1, 2, 3\\ v_1: 4,5,6,7\\ v_2:4,8,8,9\\v_3:0,9,5,6\\v_4:1,2,3,7\end{array}$ 	  &(1,2)&10 	& $\begin{array}{l}v_0 : 0, 1,2,3\\ v_1: 4,4,5,6\\ v_2:0,7,8,5\\v_3:1,7,9,6\\v_4:2,3,9,8\end{array}$ 	  &(1,2)\\ \hline
2 	& $\begin{array}{l}v_0 : 0, 1, 2, 3\\ v_1: 4,5,6,7\\ v_2:4,8,8,9\\v_3:0,5,6,7\\v_4:1,2,3,9\end{array}$ 	  &(1,2)&11	& $\begin{array}{l}v_0 : 0, 1,2,3\\ v_1: 0,4,5,6\\ v_2:1,7,8,4\\v_3:2,5,9,7\\v_4:3,6,9,8\end{array}$ 	  &(1,2)\\ \hline
3 	& $\begin{array}{l}v_0 : 0, 1, 2, 3\\ v_1: 4,5,6,7\\ v_2:4,8,9,5\\v_3:0,8,9,6\\v_4:1,2,3,7\end{array}$ 	  &(1,2)&12 	& $\begin{array}{l}v_0 : 0,1,2, 3\\ v_1: 4,5,6,7\\ v_2:4,8,9,10\\v_3:0,8,9,10\\v_4:1,2,3,5,6,7\end{array}$ 	  &(1,1)\\ \hline
4 	& $\begin{array}{l}v_0 : 0, 1, 2, 3\\ v_1:4,5,6,7\\ v_2:4,5,6,8\\v_3:0,9,7,8\\v_4:1,2,3,9\end{array}$ 	  &(1,2)&13 	& $\begin{array}{l}v_0 : 0, 1,2,3\\ v_1: 4,5,6,7\\v_2:4,8,9,10\\v_3:0,5,8,9\\v_4:1,2,3,6,7,10\end{array}$ 	  &(1,1)\\ \hline
5 	& $\begin{array}{l}v_0 : 0, 2,1, 3\\ v_1: 4,6,5,7\\ v_2:4,8,8,9\\v_3:0,5,1,9\\v_4:2,6,3,7\end{array}$ 	  &(1,2)&14 	& $\begin{array}{l}v_0 : 0, 1, 2,3\\ v_1: 4,5,6,7\\ v_2:4,8,9,10\\v_3:0,8,5,6,9,10\\v_4:1,2,3,7\end{array}$ 	  &(1,1)\\ \hline
6 	& $\begin{array}{l}v_0 : 0, 2,1, 3\\ v_1: 4,6,5,7\\ v_2:4,8,5,9\\v_3:0,8,1,9\\v_4:2,6,3,7\end{array}$ 	  &(1,2)&15 	& $\begin{array}{l}v_0 : 0, 1, 2,3\\ v_1: 4,5,6,7\\ v_2:4,8,9,10,8,9\\v_3:0,10,5,6\\v_4:1,2,3,7\end{array}$ 	  &(1,1)\\ \hline
7 	& $\begin{array}{l}v_0 : 0, 1, 2, 3\\ v_1: 4,5,6,7\\ v_2:4,5,8,9\\v_3:0,1,6,8\\v_4:2,3,9,7\end{array}$ 	  &(1,2)&16 	& $\begin{array}{l}v_0 : 0, 1, 2,3\\ v_1: 4,5,6,7\\ v_2:4,8,9,10,8,9\\v_3:0,5,6,7\\v_4:1,2,3,10\end{array}$ 	  &(1,1)\\ \hline
8 	& $\begin{array}{l}v_0 : 0, 1, 2, 3\\ v_1: 4,5,6,7\\ v_2:0,8,9,4\\v_3:1,5,6,8\\v_4:2,3,7,9\end{array}$ 	  &(1,2)&17 	& $\begin{array}{l}v_0 : 0, 1,2,3\\ v_1: 4,6,5,7\\ v_2:4,8,5,9\\v_3:0,8,10,9\\v_4:1,2,3,7,10,6\end{array}$ 	  &(1,1)\\ \hline
9 	& $\begin{array}{l}v_0 : 0, 1, 2, 3\\ v_1:4,5,6,7\\ v_2:0,8,8,4\\v_3:1,5,6,9\\v_4:2,3,7,9\end{array}$ 	  &(1,2)&18 	& $\begin{array}{l}v_0 : 0, 1, 2,3\\ v_1: 4,5,6,7\\ v_2:4,5,8,9\\v_3:0,10,6,8\\v_4:1,2,3,10,7,9\end{array}$ 	  &(1,1)\\ \hline
\end{tabular}
\end{table}

\begin{table}[ht]
\centering
\label{my-label11}
\begin{tabular}{|c|l|c||c|l|c|}
\hline
\multicolumn{6}{|c|}{\large{\bf Table 2.5 continued}}\\
\hline
\# of Graph	& Rotation System & $(n_1,n_2)$&\# of Graph	& Rotation System & $(n_1,n_2)$ \\ \hline
19 	& $\begin{array}{l}v_0 : 0, 1, 2, 3\\ v_1: 4,5,6,7\\ v_2:4,5,8,9\\v_3:0,10,8,6,10,9\\v_4:1,2,3,7\end{array}$ 	  &(1,1)&28 	& $\begin{array}{l}v_0 : 0, 2,1, 3\\ v_1: 4,5,6,7\\ v_2:4,8,10,5,8,9\\v_3:0,6,1,9\\v_4:2,10,3,7\end{array}$ 	  &(1,1)\\ \hline
20 	& $\begin{array}{l}v_0 : 0, 1, 2, 3\\ v_1: 4,5,6,7\\ v_2:4,8,9,5,8,10\\v_3:0,10,9,6
\\v_4:1,2,3,7\end{array}$ 	  &(1,1)&29 	& $\begin{array}{l}v_0 : 0, 1, 2,3\\ v_1: 4,5,6,7\\ v_2:0,8,9,4\\v_3:1,5,10,8\\v_4:2,6,10,3,7,9\end{array}$ 	  &(1,1)\\ \hline
21 	& $\begin{array}{l}v_0 : 0, 1, 2, 3\\ v_1: 4,5,6,7\\ v_2:4,5,8,9\\v_3:0,6,8,10,7,9\\v_4:1,2,3,10\end{array}$ 	  &(1,1)&30 	& $\begin{array}{l}v_0 : 0, 1,2, 3\\ v_1: 4,5,6,7\\ v_2:0,8,9,4\\v_3:1,8,9,5\\v_4:2,10,7,3,10,6\end{array}$ 	  &(1,1)\\ \hline
22 	& $\begin{array}{l}v_0 : 0, 1, 2, 3\\ v_1: 4,5,6,7\\ v_2:4,5,6,8\\v_3:0,9,7,10,9,8\\v_4:1,2,10\end{array}$ 	  &(1,1)&31 	& $\begin{array}{l}v_0 : 0, 1,2,3\\ v_1: 4,5,6,7\\ v_2:0,8,9,4\\v_3:1,5,6,10\\v_4:2,3,7,10,8,9\end{array}$ 	  &(1,1)\\ \hline
23 	& $\begin{array}{l}v_0 : 0, 1, 2, 3\\ v_1: 4,5,6,7\\ v_2:4,8,9,10\\v_3:0,1,5,8\\v_4:2,6,9,3,7,10\end{array}$ 	  &(1,1)&32 	& $\begin{array}{l}v_0 : 0, 1,2,3\\ v_1: 4,5,6,7\\ v_2:0,8,9,4\\v_3:1,5,6,8\\v_4:2,10,7,3,10,9\end{array}$ 	  &(1,1)\\ \hline
24 	& $\begin{array}{l}v_0 : 0, 2,1, 3\\ v_1: 4,6,5,7\\ v_2:4,8,9,10,8,9\\v_3:0,5,1,10\\v_4:2,6,3,7\end{array}$ 	  &(1,1)&33 	& $\begin{array}{l}v_0 : 0, 1,2,3\\ v_1: 4,5,6,7\\ v_2:0,8,10,9,8,4\\v_3:1,5,6,9\\v_4:2,3,7,10\end{array}$ 	  &(1,1)\\ \hline
25 	& $\begin{array}{l}v_0 : 0, 1, 2, 3\\ v_1: 4,5,6,7\\ v_2:4,5,8,9\\v_3:0,1,10,8\\v_4:2,7,9,3,6,10\end{array}$ 	  &(1,1)&34 	& $\begin{array}{l}v_0 : 0, 1, 2,3\\ v_1: 4,5,6,7\\ v_2:0,8,9,4,8,9\\v_3:1,5,6,10\\v_4:2,3,7,10\end{array}$ 	  &(1,1)\\ \hline
26 	& $\begin{array}{l}v_0 : 0, 2,1, 3\\ v_1: 4,6,5,7\\ v_2:4,8,5,9\\v_3:0,8,1,9\\v_4:2,10,6,10,7\end{array}$ 	  &(1,1)&35 	& $\begin{array}{l}v_0 : 0, 1, 2,3\\ v_1: 4,5,6,4,5,7\\ v_2:0,8,9,6\\v_3:1,8,10,7\\v_4:2,3,10,9\end{array}$ 	  &(1,1)\\ \hline
27 	& $\begin{array}{l}v_0 : 0, 1, 2, 3\\ v_1: 4,5,6,7\\ v_2:4,5,8,9\\v_3:0,1,6,8\\v_4:2,10,9,3,10,7\end{array}$ 	  &(1,1)&36 	& $\begin{array}{l}v_0 : 0, 1,2,3\\ v_1: 0,4,5,6\\ v_2:1,7,8,4\\v_3:2,5,9,7\\v_4:3,10,9,6,10,8\end{array}$ 	  &(1,1)\\ \hline
\end{tabular}
\end{table}

\begin{table}[ht]
\centering
\label{my-label12}
\begin{tabular}{|c|l|c||c|l|c|}
\hline
\multicolumn{6}{|c|}{\large{\bf Table 2.6}}\\
\hline
\# of Graph	& Rotation System & $(n_1,n_2)$&\# of Graph	& Rotation System & $(n_1,n_2)$ \\ \hline
1 	& $\begin{array}{l}v_0 : 0, 1, 2, 3\\ v_1: 0, 4,5,6\\ v_2:1,7,8,9\\v_3:2,10,4,7\\v_4:3,8,5,11\\v_5:6,9,10,11\end{array}$ 	  &(1,1)&7 	& $\begin{array}{l}v_0 : 0, 1, 3,2\\ v_1: 4,6,5,7\\ v_2:0,8,9,4\\v_3:1,10,2,11\\v_4:3,5,8,9\\v_5:6,11,7,10\end{array}$ 	  &(1,1)\\ \hline
2 	& $\begin{array}{l}v_0 : 0, 1, 2, 3\\ v_1: 0, 4,5,6\\ v_2:1,8,7,9\\v_3:2,10,4,11\\v_4:3,10,5,7\\v_5:6,8,11,9\end{array}$ 	  &(1,1)&8 	& $\begin{array}{l}v_0 : 0, 1, 2, 3\\ v_1: 4,5,6,7\\ v_2:0,8,9,10\\v_3:1,2,11,4\\v_4:3,11,5,6\\v_5:7,8,9,10\end{array}$ 	  &(1,1)\\ \hline
3 	& $\begin{array}{l}v_0 : 0, 1, 2, 3\\ v_1: 0, 4,6,5\\ v_2:1,7,8,9\\v_3:2,10,4,11\\v_4:3,5,7,8\\v_5:6,11,9,10\end{array}$ 	  &(1,1)&9 	& $\begin{array}{l}v_0 : 0, 1, 2, 3\\ v_1: 4,5,6,7\\ v_2:0,8,9,10\\v_3:1,2,4,11\\v_4:3,5,6,7\\v_5:8,9,10,11\end{array}$ 	  &(1,1)\\ \hline
4 	& $\begin{array}{l}v_0 : 0, 1, 2, 3\\ v_1: 0,6, 4,5\\ v_2:1,7,8,9\\v_3:2,10,11,4\\v_4:3,10,11,5\\v_5:6,7,8,9\end{array}$ 	  &(1,1)&10 	& $\begin{array}{l}v_0 : 0, 1, 3, 2\\ v_1: 4,5,6,7\\ v_2:0,8,4,9\\v_3:1,10,2,11\\v_4:3,5,6,7\\v_5:8,11,9,10\end{array}$ 	  &(1,1)\\ \hline
5 	& $\begin{array}{l}v_0 : 0, 1, 2, 3\\ v_1: 0,5,4,6\\ v_2:1,8,7,9\\v_3:2,10,7,11\\v_4:3,10,4,11\\v_5:5,9,6,8\end{array}$ 	  &(1,1)&11 	& $\begin{array}{l}v_0 : 0, 1, 2, 3\\ v_1: 4,5,6,7\\ v_2:8,9,10,11\\v_3:0,1,2,4\\v_4:3,8,9,10\\v_5:5,6,7,11\end{array}$ 	  &(1,1)\\ \hline
6 	& $\begin{array}{l}v_0 : 0, 1, 2, 3\\ v_1: 0, 4,5,6\\ v_2:1,7,8,9\\v_3:2,7,8,10\\v_4:3,4,5,11\\v_5:6,9,10,11\end{array}$ 	  &(1,1)&12 	& $\begin{array}{l}v_0 : 0, 2,1, 3\\ v_1: 4,6,5,7\\ v_2:0,8,1,9\\v_3:2,10,3,11\\v_4:4,10,5,11\\v_5:6,9,7,8\end{array}$ 	  &(1,1)\\ \hline
\end{tabular}
\end{table}
\end{document}